\newtheorem{theorem}{Theorem}[section]
\newtheorem{lemma}[theorem]{Lemma}
\newtheorem{proposition}[theorem]{Proposition}
\newtheorem{corollary}[theorem]{Corollary}
\theoremstyle{definition}
\newtheorem{question}{Question}[section]
\theoremstyle{remark}
\newtheorem{remark}[theorem]{Remark}
\numberwithin{equation}{section}
\newcommand{\R}{\ensuremath{\mathbb{R}}}
\newcommand{\N}{\ensuremath{\mathbb{N}}}
\newcommand{\set}[1]{\left\{#1\right\}}
\newcommand{\ep}{\varepsilon}
\newcommand{\f}{\infty}
\begin{document}
\title[Intersections with Cantor sets]{On the Intersection of Cantor set with the unit circle and some sequences}
\author{Kan Jiang}
\address[K. Jiang]{Department of Mathematics, Ningbo University, Ningbo 315211, People's Republic of China}
\email{jiangkan@nbu.edu.cn}

\author{Derong Kong}
\address[D. Kong]{College of Mathematics and Statistics, Center of Mathematics, Chongqing University, Chongqing 401331, People's Republic of China}
\email{derongkong@126.com}

\author{Wenxia Li}
\address[W. Li]{School of Mathematical Sciences, Key Laboratory of MEA (Ministry of Education) \& Shanghai Key Laboratory of PMMP, East China Normal University, Shanghai 200241, People's Republic of China}
\email{wxli@math.ecnu.edu.cn}

\author{Zhiqiang Wang}
\address[Z. Wang]{College of Mathematics and Statistics, Center of Mathematics \& Key Laboratory of Nonlinear Analysis and its Applications (Ministry of Education), Chongqing University, Chongqing 401331, People's Republic of China}
\email{zhiqiangwzy@163.com,zqwangmath@cqu.edu.cn}

\date{\today}
\subjclass[2010]{Primary: 28A80, Secondary:28A78}
\begin{abstract}
For $\lambda\in(0,1/2)$ let $K_\lambda$ be the self-similar set in $\R$ generated by the  iterated function system $\set{f_0(x)=\lambda x, f_1(x)=\lambda x+1-\lambda}$.
In this paper, we investigate the intersection of the unit circle $\mathbb{S} \subset \mathbb{R}^2$ with the Cartesian product $K_{\lambda} \times K_{\lambda}$. We prove that for $\lambda \in(0, 2 - \sqrt{3}]$, the intersection is \emph{trivial}, i.e.,
\[
\mathbb{S} \cap (K_{\lambda} \times K_{\lambda}) = \{(0,1), (1,0)\}.
\]
If $\lambda\in [0.330384,1/2)$, then the intersection $\mathbb{S} \cap (K_{\lambda} \times K_{\lambda})$ is non-trivial. In particular, if $\lambda\in [0.407493 , 1/2)$ the intersection $\mathbb{S} \cap (K_{\lambda} \times K_{\lambda})$ is of cardinality continuum.
Furthermore, the bound $2 - \sqrt{3}$ is sharp: there exists a sequence $\{\lambda_n\}_{n \in \mathbb{N}}$ with $\lambda_n \searrow 2 - \sqrt{3}$ such that
$\mathbb{S} \cap (K_{\lambda_n} \times K_{\lambda_n})$ is non-trivial for all $n\in\N$.
This result provides a negative answer to a problem posed by Yu (2023).
Our methods extend beyond the unit circle and remain effective for many nonlinear curves. By employing tools from number theory, including the quadratic reciprocity law, we   analyze the intersection  of   Cantor sets with some sequences.  A dichotomy is established in terms of the Legendre symbol associated with the digit set, revealing a fundamental arithmetic constraint governing such intersections.
\end{abstract}
\maketitle

\section{Introduction}
Given a continuous curve $\Gamma$ and a fractal set $\mathcal{K}$ on the plane, determining the closed form of their intersection is in general a challenging problem.
The classical Marstrand's slicing theorem \cite{Mattila1999} describes the Hausdorff dimension of the intersection $\Gamma\cap \mathcal{K}$ for a typical line $\Gamma$. However, the intersection is difficult to analyze if the curve $\Gamma$ is nonlinear.
Analogous problems can be considered in the discrete case. Given a sequence $\{x_n\}_{n=1}^\f$ and a Cantor set $K$ in $\mathbb R$, how can we describe the intersection $\{x_n:n\in \mathbb{N}\}\cap K$?
For instance, we even do not know the structures of the following intersections:
$$\left\{\dfrac{1}{n^2}:n\in \mathbb{N}\right\}\cap C \quad\mbox{and}\quad \left\{\dfrac{1}{n!}:n\in \mathbb{N}\right\}\cap C.$$
Here and throughout the paper, $C$ stands for the middle-third Cantor set.

Another motivation comes from a recent question posed by Yu \cite{Yu2023}, where he made use of the $\ell^1$-dimension to study the distribution of missing digits points near manifolds.
Yu posed the following general question \cite[Question 1.5]{Yu2023}.

\begin{question}\label{Yu-general-question}
Let $M$ be a non-degenerate analytic manifold, and let $\mathcal{K}$ be a missing digits set in $\R^d$. To determine whether or not $M \cap \mathcal{K}$ is infinite.
\end{question}

For $\lambda\in(0,1/2)$,  let $K_\lambda$ be the self-similar set generated by the iterated function system (IFS) (cf. \cite{Hutchinson1981}) $$ \big\{ f_0(x) = \lambda x ,\; f_1(x) = \lambda x + 1- \lambda \big\}. $$
Then the convex hull of $K_\lambda$ is the unit interval $[0,1]$ for all $\lambda\in(0,1/2)$. A particular special case of Question \ref{Yu-general-question} was formulated by Yu \cite{Yu2023} as follows.
\begin{question}\label{Yu-special-question}
Consider the unit circle $$\mathbb S:=\set{(x,y)\in\mathbb R^2:x^2+y^2=1}.$$ Is the intersection $\mathbb S\cap (K_{1/5}\times K_{1/5})$ infinite?
\end{question}
Yu remarked that the methods developed by Shmerkin \cite{Shmerkin-2019} and Wu \cite{Wu-2019} may be used to deduce that the points under consideration form a set with zero Hausdorff dimension. But, this is not sufficient to conclude the finiteness. Note that $K_{1/3}=C$ is the middle-third Cantor set.
Recently, with the assistance of computers, Du, Jiang and Yao \cite{DJY2024} proved that the intersection $\mathbb S\cap (K_{1/3}\times K_{1/3})$ contains  at least $10,000,000$ points.

In this paper we address two aspects. On  one hand, for a Cantor set $K\subset\mathbb R$ we consider the intersection of $K\times K$ with a curve, and give a negative answer to Question \ref{Yu-special-question} and  partially resolve Question \ref{Yu-general-question}.
On the other hand, we investigate the intersection of $K$ with  a sequence of real numbers.
Throughout the paper, let $\mathbb{N}$ denote the set of all positive integers, and for $k\in \mathbb{N}$, we define $\mathbb{N}_{\geq k}=[k,+\infty)\cap \mathbb{N}$.
For $a,b\in \mathbb{N}$ we write $\gcd(a,b)$ for the greatest common divisor of $a$ and $b$.
We use $\# A$ to denote the cardinality of a set $A$.

Note that for each $\lambda\in(0,1/2)$ the self-similar set $K_\lambda$ is a Cantor set in $\R$.
Our first main result focuses on the intersection  $\mathbb S\cap(K_\lambda \times K_\lambda)$.
We say the intersection $\mathbb S\cap(K_\lambda \times K_\lambda)$  is \emph{trivial} if it consists of exactly two trivial points $(0,1)$ and $(1,0)$; otherwise, we say  $\mathbb S\cap(K_\lambda \times K_\lambda)$ is \emph{non-trivial}.

\begin{theorem}\label{circle-lambda}\mbox{}
\begin{enumerate}[{\rm(i)}]
\item For $0< \lambda \le 2 - \sqrt{3}$, the intersection $\mathbb S\cap(K_\lambda \times K_\lambda)$ is trivial.

\item For $0.330384 \le \lambda < 1/2$, the intersection $\mathbb S\cap(K_\lambda \times K_\lambda)$ is non-trivial.

\item For $0.407493 \le \lambda < 1/2$, the intersection $\mathbb S\cap(K_\lambda \times K_\lambda)$ is of cardinality continuum.
\end{enumerate}
\end{theorem}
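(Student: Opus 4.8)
\emph{Part (i).} The plan is to argue by contradiction, the entire force of the hypothesis being the equivalence $\lambda\le 2-\sqrt3\iff\sqrt{2\lambda}\le 1-\lambda$. Suppose $(x,y)\in\mathbb S\cap(K_\lambda\times K_\lambda)$ is non-trivial; then $x,y\in(0,1)$, and by the symmetry of $\mathbb S$ and of $K_\lambda\times K_\lambda$ under $(x,y)\mapsto(y,x)$ we may assume $x\le y$. Since $\lambda\le 2-\sqrt3<1-\tfrac1{\sqrt2}$, the central gap $(\lambda,1-\lambda)$ of $K_\lambda$ contains $\tfrac1{\sqrt2}$, so $y>\tfrac1{\sqrt2}$ forces $y\in[1-\lambda,1]$ and $x<\tfrac1{\sqrt2}$ forces $x\in[0,\lambda]$. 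Writing $x=f_0(w)=\lambda w$ and $y=f_1(z)=\lambda z+(1-\lambda)$ with $w,z\in K_\lambda$, substituting into $x^2+y^2=1$ and simplifying gives
\[
\lambda\bigl(w^2+(1-z)^2\bigr)=2(1-z).
\]
Put $t:=1-z$; then $t\in K_\lambda$ since $K_\lambda=1-K_\lambda$, and $t\ne 0$ because $z=1$ is the trivial case, so the relation reads $\lambda w^2=t(2-\lambda t)$ with $w,t\in K_\lambda$.

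I would then run a ``peeling'' recursion exploiting $K_\lambda\subseteq[0,\lambda]\cup[1-\lambda,1]$. Since $t\mapsto t(2-\lambda t)$ is increasing on $[1-\lambda,1]$ with minimum value $(1-\lambda)(2-\lambda+\lambda^2)>1\ge\lambda w^2$, the alternative $t\in[1-\lambda,1]$ is impossible, so $t=\lambda t_1$ with $t_1\in K_\lambda$ and the equation becomes $w^2=2t_1-\lambda^2 t_1^2$. The same estimate (using $w\le1$) rules out $t_1\in[1-\lambda,1]$, so $t_1=\lambda t_2$ and $w^2=2\lambda t_2-\lambda^4 t_2^2\le 2\lambda\le(1-\lambda)^2$; hence $w\le 1-\lambda$, and as $w\in K_\lambda$ this forces $w=\lambda w_1$ with $w_1\in K_\lambda$ (the borderline case $w=1-\lambda$, possible only when $\lambda=2-\sqrt3$, is eliminated by hand). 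Substituting, $w_1^2=\tfrac{2t_2}{\lambda}-\lambda^2 t_2^2\le1$ forces $t_2=\lambda t_3$, and then $w_1^2=2t_3-\lambda^4 t_3^2$, which is the very equation with which the cycle began. Iterating indefinitely yields $t=\lambda^n t_n$ with $t_n\in K_\lambda\subseteq[0,1]$ for every $n$, so $t=0$, a contradiction; and the role of $2-\sqrt3$ in this argument is exactly the value at which the inequality $\sqrt{2\lambda}\le1-\lambda$, invoked at the step $w\le 1-\lambda$, becomes an equality.

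\emph{Parts (ii) and (iii).} Here the plan is constructive. A point of $\mathbb S\cap(K_\lambda\times K_\lambda)$ is the nested intersection of cylinder rectangles $I_{a_1\cdots a_n}\times I_{b_1\cdots b_n}$, each of which must meet $\mathbb S$; applying the affine map sending such a rectangle onto $[0,1]^2$ carries $\mathbb S$ to an arc of a circle of radius $\lambda^{-n}$, so these rescaled arcs are arbitrarily flat at deep levels. The plan is to locate, for $\lambda$ in the stated range, an explicit rectangle $R=I_u\times I_v$ (with $u,v$ short words, found by a finite search) whose rescaled arc is trapped in a thin parallelogram of negative slope that robustly crosses two of the four first-level subrectangles of $[0,1]^2$, and such that rescaling this parallelogram onto $[0,1]^2$ through either of those two subrectangles reproduces a configuration of the same type, with the curvature only decreasing. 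Following the two choices at every stage then produces a binary tree of nested admissible rectangles, hence a set of cardinality continuum inside $\mathbb S\cap(K_\lambda\times K_\lambda)$, proving (iii); part (ii) is the same scheme when only a single reproducing branch is required, so only one admissible descendant need be produced at each stage, which is why (ii) survives down to the smaller constant. An equivalent viewpoint, via Newhouse-type thickness, is that the slice $E:=\{x\in[0,1]:\sqrt{1-x^2}\in K_\lambda\}$ is a Cantor set (the diffeomorphic image $g(K_\lambda)$, $g(x)=\sqrt{1-x^2}$) containing $0$ and $1$, and one shows $E\cap K_\lambda$ is non-trivial, resp.\ perfect, by a thickness estimate along a chain of cylinders of $K_\lambda$ kept away from $0$ and $1$.

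\emph{The main obstacle} lies entirely in (ii) and (iii): one must make the ``flat arc versus straight line'' comparison quantitative and uniform over all scales, controlling the deviation of the rescaled arc from its chord (of size $O(\lambda^n)$ at level $n$) against the width and crossing margin of the trapping parallelogram, and one must check that the reproduction of the configuration persists near the degenerate points $(0,1)$ and $(1,0)$, where the parametrization $x\mapsto(x,\sqrt{1-x^2})$ has a vanishing derivative; this is what forces the construction onto a sub-arc bounded away from those two points and is ultimately responsible for the thresholds being strictly larger than $2-\sqrt3$. The explicit values $0.330384$ and $0.407493$ then appear as the infima of $\lambda$ for which the respective finite configurations exist, and their determination is computer-assisted.
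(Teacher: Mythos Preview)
Your strategy is sound and broadly aligned with the paper's, but the execution differs in instructive ways.

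\textbf{Part (i).} The paper also runs a peeling induction, but it works directly with $x$ and $y$: assuming $x\in f_{0^k}([0,1])$, it rules out $x\in f_{0^k1}([0,1])$ by splitting on whether $y\in f_{1^{2k+1}}([0,1])$ and computing $x^2+y^2$ in each case, the key inequality being $\lambda^2-4\lambda+1\ge0$. Your substitution $x=\lambda w$, $y=1-\lambda t$ leading to the recursion on $\lambda w^2=t(2-\lambda t)$ is a cleaner algebraic packaging of the same idea; both arguments pivot on exactly the inequality $2\lambda\le(1-\lambda)^2$. One small imprecision: after one full pass your equation is $w_1^2=2t_3-\lambda^4 t_3^2$, not literally the starting equation $w^2=2t_1-\lambda^2 t_1^2$; the coefficient has dropped from $\lambda^2$ to $\lambda^4$. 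This only strengthens every estimate, so the recursion still closes, but you should phrase it as ``an equation of the same shape with a smaller second coefficient'' rather than ``the very equation.''

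\textbf{Parts (ii)--(iii).} Your geometric picture (rescaled arc trapped in a parallelogram crossing subrectangles) is exactly dual to what the paper does analytically: it sets $g(x,y)=x^2+y^2$, writes $g(I\times J)$ as an interval, and shows that for basic intervals $I=[a,a+\lambda^n]$, $J=[b,b+\lambda^n]$ satisfying $\frac{1-2\lambda}{\lambda}(a+\lambda^n)\le b\le \frac{a}{1-2\lambda}$ the four child images $g(I_i\times J_j)$ overlap, so $g((K_\lambda\cap I)\times(K_\lambda\cap J))=g(I\times J)$; this is your ``single reproducing branch'' condition. For (iii) a sharper window $\frac{1-\lambda-\lambda^2}{\lambda}(a+\lambda^n)\le b<b+\lambda^n\le\frac{\lambda a}{1-2\lambda}$ forces every interior point of $g(I\times J)$ to be double-covered, yielding your binary tree. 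The explicit basic intervals are then exhibited (at levels $2$ or $3$ for (ii), levels $3$ or $4$ for (iii)), and the numerical thresholds arise from the polynomial inequalities these choices must satisfy. Your proposal correctly identifies the mechanism and its obstacle (staying away from $(0,1)$ and $(1,0)$), but you would still need to produce those intervals and check the inequalities to have a proof; the thickness viewpoint you mention is a legitimate alternative, though the paper does not take it.
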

\begin{remark}
\begin{enumerate}[{\rm(a)}]
\item Note that $0<1/5<2-\sqrt{3}$. So, by Theorem \ref{circle-lambda} (i) it follows that $$\mathbb S\cap(K_{1/5}\times K_{1/5}) = \{(0,1),(1,0)\}.$$
This provides a negative answer to Question \ref{Yu-special-question}.

\item For $k \in \mathbb{N}$, take $x= \lambda^k(1-\lambda)$ and $y= 1-\lambda^{2k+1}$ in $K_\lambda$. Define $\lambda_k \in(0, 1/2)$ to be the appropriate  root of the equation $\lambda^{2k}(1-\lambda)^2 + (1-\lambda^{2k+1})^2 = 1$, i.e., $\lambda^{2k+2} + \lambda^2-4\lambda+1=0$.
Note that the intersection $\mathbb S\cap(K_{\lambda_k}\times K_{\lambda_k})$ is non-trivial, and the sequence $\{\lambda_k\}$ is decreasing to $2-\sqrt{3}$ as $k \to \f$.
Thus, the constant $2-\sqrt{3}$ in Theorem \ref{circle-lambda} (i) is optimal.

\item The constants in Theorem \ref{circle-lambda} (ii) and (iii) is not optimal.
We conjecture that the intersection $\mathbb S^1\cap(K_{\lambda}\times K_{\lambda})$    is infinite for all $2-\sqrt{3} < \lambda < 1/2$.
\end{enumerate}\end{remark}

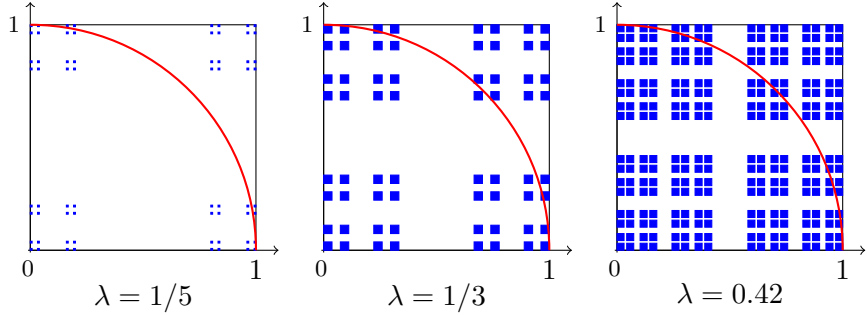
\begin{figure}[h]

\begin{tikzpicture}[scale=3]
\draw[->](-.01,0)node[below]{\footnotesize$0$}--(1,0)node[below]{$1$}--(1.1,0);
\draw[->](0,-.01)--(0,1)node[left]{\footnotesize $1$}--(0,1.1);
\draw(0,0) rectangle (1,1);

\foreach \x in {0,0.2^2-0.2^3, 0.2-0.2^2, 0.2-0.2^3, 1-0.2, 1-0.2+0.2^2-0.2^3, 1-0.2^2, 1-0.2^3}
\foreach \y in {0,0.2^2-0.2^3, 0.2-0.2^2, 0.2-0.2^3, 1-0.2, 1-0.2+0.2^2-0.2^3, 1-0.2^2, 1-0.2^3}
{\filldraw[blue](\x,\y) rectangle (\x+0.2^3,\y+0.2^3);}

\draw[thick,red](1,0) arc [start angle=0, end angle=90, radius=1cm];

\draw (0.5,-0.1)node[below]{$\lambda=1/5$};

\draw[->](1.3-.01,0)node[below]{\footnotesize$0$}--(1.3+1,0)node[below]{$1$}--(1.3+1.1,0);
\draw[->](1.3,-.01)--(1.3,1)node[left]{\footnotesize $1$}--(1.3,1.1);
\draw(1.3,0) rectangle (1.3+1,1);

\foreach \x in {0,1/3^2-1/3^3, 1/3-1/3^2, 1/3-1/3^3, 1-1/3, 1-1/3+1/3^2-1/3^3, 1-1/3^2, 1-1/3^3}
\foreach \y in {0,1/3^2-1/3^3, 1/3-1/3^2, 1/3-1/3^3, 1-1/3, 1-1/3+1/3^2-1/3^3, 1-1/3^2, 1-1/3^3}
{\filldraw[blue](1.3+\x,\y) rectangle (1.3+\x+1/3^3,\y+1/3^3);}

\draw[thick,red](1.3+1,0) arc [start angle=0, end angle=90, radius=1cm];

\draw (1.3+0.5,-0.1)node[below]{$\lambda=1/3$};

\draw[->](2.6-.01,0)node[below]{\footnotesize$0$}--(2.6+1,0)node[below]{$1$}--(2.6+1.1,0);
\draw[->](2.6,-.01)--(2.6,1)node[left]{\footnotesize $1$}--(2.6,1.1);
\draw(2.6,0) rectangle (2.6+1,1);

\foreach \x in {0,0.42^3-0.42^4, 0.42^2-0.42^3, 0.42^2-0.42^4, 0.42-0.42^2, 0.42-0.42^2+0.42^3-0.42^4, 0.42-0.42^3, 0.42-0.42^4, 1-0.42, 1-0.42+0.42^3-0.42^4, 1-0.42+0.42^2-0.42^3,1-0.42+0.42^2-0.42^4, 1-0.42^2, 1-0.42^2+0.42^3-0.42^4, 1-0.42^3, 1-0.42^4}
\foreach \y in {0,0.42^3-0.42^4, 0.42^2-0.42^3, 0.42^2-0.42^4, 0.42-0.42^2, 0.42-0.42^2+0.42^3-0.42^4, 0.42-0.42^3, 0.42-0.42^4, 1-0.42, 1-0.42+0.42^3-0.42^4, 1-0.42+0.42^2-0.42^3,1-0.42+0.42^2-0.42^4, 1-0.42^2, 1-0.42^2+0.42^3-0.42^4, 1-0.42^3, 1-0.42^4}
{\filldraw[blue](2.6+\x,\y) rectangle (2.6+\x+0.42^4,\y+0.42^4);}

\draw[thick,red](2.6+1,0) arc [start angle=0, end angle=90, radius=1cm];

\draw (2.6+0.5,-0.1)node[below]{$\lambda=0.42$};
\end{tikzpicture}
{\caption{The intersection   $\mathbb S^1\cap(K_\lambda \times K_\lambda)$ with $\lambda=1/5, 1/3$ and $0.42$.}}
\label{}
\end{figure}

Next, we consider the intersection of $K_\lambda\times K_{\lambda}$ with some other curves or surfaces.

\begin{theorem}\label{curve}\mbox{}
\begin{enumerate}[{\rm(i)}]
\item Let $0< \lambda \le 1/3$, and let $q$ be an integer with $2 \le q \le 1/\lambda -1$. Then
$$\{(x,y):y=x^{q}\}\cap (K_\lambda \times K_\lambda) =\big\{ (\lambda^k,\lambda^{qk} ): k\in \mathbb{N}\big\}\cup\{(0,0), (1,1)\}. $$

\item  For $0< \lambda \le 0.187915$, we have $$\{(x,y,z):  x^2+y^2=z^2\}\cap (K_\lambda\times K_\lambda \times K_\lambda) =\{(x,0,x), (0,x,x): x \in K_\lambda\}. $$
\end{enumerate}
\end{theorem}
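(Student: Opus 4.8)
The plan is to prove both parts by the same device: use the self-similarity of $K_\lambda$ to slide the relevant point into the right-hand piece $[1-\lambda,1]$, and then play off the depth at which the $K_\lambda$-points sit inside the Cantor structure against the multiplicative distortion imposed by the curve (resp.\ cone). I will freely use two elementary facts, valid for every $\lambda\in(0,1/2)$: $K_\lambda$ is symmetric, $1-K_\lambda=K_\lambda$; and $K_\lambda\cap[0,\lambda^j]=\lambda^jK_\lambda$ for all $j\ge0$ (induction on $j$, since the first gap $(\lambda,1-\lambda)$ disconnects the two first-level cylinders). For nonzero $x\in K_\lambda$ let $m(x)\ge0$ be the largest integer with $x\in\lambda^{m(x)}K_\lambda$ (finite); then $x=\lambda^{m(x)}w$ with $w\in K_\lambda\setminus\lambda K_\lambda=K_\lambda\cap[1-\lambda,1]$. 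In both parts the ``$\supseteq$'' inclusion is immediate (e.g.\ $\lambda^k=f_0^k(1)\in K_\lambda$ and $(\lambda^k)^q=\lambda^{qk}\in K_\lambda$), so the content is the reverse inclusion.

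For (i): take $x\in K_\lambda$ with $x\notin\{0,1\}$ and $x^q\in K_\lambda$, and write $x=\lambda^m w$ as above. If $w=1$ then $x=\lambda^m$ with $m\ge1$, a listed point. Otherwise $w\in K_\lambda\cap[1-\lambda,1)$; since $x^q=\lambda^{mq}w^q<\lambda^{mq}$ and $x^q\in K_\lambda\cap[0,\lambda^{mq}]=\lambda^{mq}K_\lambda$, we also get $w^q\in K_\lambda$. So it suffices to prove $A:=\{w\in K_\lambda\cap[1-\lambda,1):w^q\in K_\lambda\}$ is empty. Fix $w\in A$ and apply symmetry to both $w$ and $w^q$: write $1-w=\lambda^{n}r$ and $1-w^q=\lambda^{n'}r'$ with $r,r'\in K_\lambda\cap[1-\lambda,1]$ and $n=m(1-w)\ge1$. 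The identity $1-w^q=(1-w)(1+w+\dots+w^{q-1})$ gives $1-w\le1-w^q<q(1-w)$, so $\lambda^{n}r\le\lambda^{n'}r'<q\lambda^{n}r$; with $r,r'\in[1-\lambda,1]$ this forces $1-\lambda<\lambda^{\,n'-n}<q/(1-\lambda)$, which (using $\lambda<1/2$, and here crucially $q\le1/\lambda-1$) excludes both $n'>n$ and $n'<n$. Hence $n'=n$. Now $r'=\bigl(1-(1-\lambda^{n}r)^q\bigr)/\lambda^{n}$, and $r'\le1$ rearranges to $r\le g(\lambda^{n})$, where $g(s):=\bigl(1-(1-s)^{1/q}\bigr)/s$. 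Since $s\mapsto 1-(1-s)^{1/q}$ is convex and vanishes at $0$, its difference quotient $g$ is increasing on $(0,1]$, so $g(\lambda^{n})\le g(\lambda)$; and $g(\lambda)<1-\lambda$, because via $(1-\lambda)^{1/q}\ge\sqrt{1-\lambda}$ it reduces to the polynomial inequality $3\lambda-2\lambda^2+\lambda^3<1$, true on $(0,1/3]$. Thus $r\le g(\lambda^{n})<1-\lambda\le r$, a contradiction, so $A=\varnothing$.

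For (ii): suppose $x,y,z\in K_\lambda$ with $x^2+y^2=z^2$ and $x,y>0$; by the $x\leftrightarrow y$ symmetry assume $x\ge y$, so $z\ge x\ge y>0$. If $x\le\lambda$ then $z\le\sqrt2\,\lambda<1-\lambda$ forces $z\le\lambda$, hence $x,y,z\in\lambda K_\lambda$, and we divide by $\lambda$; this terminates (since $x>0$), so we may assume $x\notin\lambda K_\lambda$, whence $z\ge x\ge1-\lambda$. The case $y\in[1-\lambda,1]$ is impossible, as it gives $z^2=x^2+y^2\ge2(1-\lambda)^2>1$; so $y\in(0,\lambda]$, and we write $y=\lambda^{\ell}v$ with $\ell=m(y)\ge1$ and $v\in K_\lambda\cap[1-\lambda,1]$. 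Since $y^2=(z-x)(z+x)$ and $z+x\in[2(1-\lambda),2]$, we obtain $z-x\in\bigl[\tfrac12\lambda^{2\ell}(1-\lambda)^2,\ \tfrac{\lambda^{2\ell}}{2(1-\lambda)}\bigr]$; writing $x=f_1(x_1)$, $z=f_1(z_1)$ with $x_1,z_1\in K_\lambda$ turns this into the same pinch for $z_1-x_1=(z-x)/\lambda$ around $\tfrac12\lambda^{2\ell-1}$. Now let $j^{*}$ be the largest level at which $x_1$ and $z_1$ lie in a common cylinder of $K_\lambda$; splitting that cylinder in two shows $\lambda^{j^{*}}(1-2\lambda)\le z_1-x_1\le\lambda^{j^{*}}$, and combining with the previous bounds confines $\lambda^{\,j^{*}-(2\ell-1)}$ to the interval $\bigl[\tfrac12(1-\lambda)^2,\ \tfrac1{2(1-\lambda)(1-2\lambda)}\bigr]$. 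For $\lambda$ small this interval sits strictly between $\lambda$ and $1$, so it contains no integer power of $\lambda$ --- a contradiction. Spelling out ``strictly between $\lambda$ and $1$'' is precisely the two inequalities $\lambda<\tfrac12(1-\lambda)^2$ and $2(1-\lambda)(1-2\lambda)>1$, both of which (together with the easy thresholds $\lambda<\sqrt2-1$ and $\lambda<1-1/\sqrt2$ used in the reductions) hold throughout $(0,0.187915]$; this finishes (ii).

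The genuinely delicate step in both parts is turning ``the curve/cone forces two points of $K_\lambda$ to be close'' into ``those points live at the same Cantor level'': in (i) this is the equality $n'=n$, in (ii) it is the exclusion of every integer power of $\lambda$ from an interval near $1/2$. These are exactly the places where the arithmetic hypotheses $q\le1/\lambda-1$ and the numerical bound on $\lambda$ are consumed, and they are the only non-routine points; the symmetry and self-similarity identities, the monotonicity of $g$, the polynomial inequality on $(0,1/3]$, and the two quadratic inequalities in (ii) are all straightforward.
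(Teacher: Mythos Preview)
Your proof is correct, and it takes a route genuinely different from the paper's.

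For part (i), the paper proceeds by a direct induction on levels: assuming $x,y\in[1-\lambda^k,1]$, it shows $x,y\in[1-\lambda^{k+1},1]$ by checking two polynomial inequalities in $\lambda$, and concludes $x=y=1$. Your argument is a one-shot contradiction: after writing $1-w=\lambda^n r$ and $1-w^q=\lambda^{n'}r'$ in normal form, the factorization $1-w^q=(1-w)\sum w^j$ together with $q\le 1/\lambda-1$ forces the exponents to match ($n'=n$), and then the single constraint $r'\le1$ collapses to $r\le g(\lambda^n)\le g(\lambda)<1-\lambda$ via the convexity of $s\mapsto 1-(1-s)^{1/q}$. This avoids induction at the cost of a mild analytic step (monotonicity of the secant slope).

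For part (ii), the paper again inducts: assuming $x\in[0,\lambda^k]$ it rules out $x\in[(1-\lambda)\lambda^k,\lambda^k]$ by splitting on whether $y\ge z-\lambda^{2k+1}$, the latter case requiring a coding argument and the polynomial $1-6\lambda+3\lambda^2+4\lambda^3-4\lambda^4>0$ (whose root gives the threshold $0.187915$). You instead pin down $z-x=y^2/(z+x)$ to an interval of size comparable to $\lambda^{2\ell}$, pass to $z_1-x_1=(z-x)/\lambda$, and invoke the gap structure of $K_\lambda$ to trap $z_1-x_1$ between $(1-2\lambda)\lambda^{j^*}$ and $\lambda^{j^*}$; the two pinches force $\lambda^{j^*-(2\ell-1)}$ into $[(1-\lambda)^2/2,\,1/(2(1-\lambda)(1-2\lambda))]$, which for small $\lambda$ lies strictly inside $(\lambda,1)$ and hence contains no integer power of $\lambda$. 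The binding inequality here is $2(1-\lambda)(1-2\lambda)>1$, i.e.\ $\lambda<(3-\sqrt5)/4\approx0.19098$, so your argument in fact covers a slightly larger range than the stated $0.187915$.

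In short: the paper's proofs are elementary level-by-level inductions; yours are unified ``level-matching'' arguments that trade induction for a single structural contradiction, and in (ii) yield a marginally better constant.
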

\begin{remark}
A direct corollary of Theorem \ref{curve} (i) is that
$$\{(x,y):y=x^2\}\cap (K_{1/3} \times K_{1/3}) =\big\{ (3^{-k},9^{-k} ): k\in \mathbb{N}\big\}\cup\{(0,0), (1,1)\}. $$
\end{remark}

Finally, we investigate the intersection of Cantor set with some sequences.
We will employ some ideas from $q$-expansions and quadratic residues from number theory. Let $p\in \N$ be an odd prime, and let $a \in \mathbb{Z}$ with $p \nmid a$. We use
\[ \left(\dfrac{a}{p}\right)_L \]
to denote the Legendre's symbol, see Section \ref{sec:sequence} for more details.

Let $m\in\N_{\ge 3}$ and $D \subset\set{0,1,\ldots, m-1}$ with $1< \#D < m$. We define
$$
K_{m,D}=\left\{ \sum _{k=1}^\infty \frac{d_k}{m^k}: d_k\in D~\forall k\in \N \right\}.
$$
The set $K_{m,D}$ is a (homogeneous) self-similar set generated by the IFS $\{\varphi_d(x)=(x+d)/m: d\in D\}$.
Note that $K_{3,\set{0,2}}=C$ is the middle-third Cantor set.
The structure of rational points in Cantor set $K_{m,D}$ has been studied in \cite{Bloshchitsyn-2015, JKLW2024, Li-Li-Wu-2022, Nagy-2001, Schleischitz-2021, Shparlinski-2021, Wall-1990}.
For $p \in \mathbb{N}_{\ge 2}$ let $\mathcal{D}_p$ be the set of all rational numbers in $[0,1]$ having a finite $p$-ary expansion.
If $\gcd(p,m)=1$, Schleischitz \cite[Corollary 4.4]{Schleischitz-2021} showed that the intersection $\mathcal{D}_p \cap K_{m,D}$ is a finite set. In the following we give a complete description on the intersection of $K_{m, D}$ with the sequence $\set{1/n^2: n\in\N}$.

\begin{theorem}\label{sequence}
Let $m \in \mathbb{N}_{\geq 3}$ be a prime, and $D\subset \{0,1,\ldots, m-1\}$ with $0\in D$ and $1<\#D<m$.
Suppose that $$\left(\dfrac{-a}{m}\right)_L=-1\quad \forall a\in D\setminus\set{0}.$$
Then we have
\begin{itemize}
\item[(1)] if $1\notin D$, $$\left\{\dfrac{1}{n^{2}}:n\in \mathbb{N}\right\}\cap K_{m,D} =\emptyset;$$
\item[(2)] if $1\in D$, $$\left\{\dfrac{1}{n^{2}}:n\in \mathbb{N}\right\}\cap K_{m,D} =\left\{\dfrac{1}{m^{2\ell}}: \ell \in \mathbb{N} \right\}.$$
\end{itemize}
\end{theorem}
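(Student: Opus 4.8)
The plan is to analyze when $1/n^2$ can admit an $m$-ary expansion using only digits from $D$. Write $n = m^a n'$ with $\gcd(n',m)=1$ and $a \ge 0$. Then $1/n^2 = m^{-2a}/(n')^2$, so by applying the IFS maps $\varphi_0$ exactly $2a$ times we may reduce to the case $\gcd(n,m)=1$; here is where the hypothesis $0 \in D$ is used, and this reduction is exactly what produces the family $\{1/m^{2\ell}\}$ in case (2) (starting from $n=1$, whose expansion $0.000\ldots$ or $0.1\overline{0}$ requires $1 \in D$). So the heart of the matter is to show: \emph{if $\gcd(n,m)=1$ and $n > 1$, then $1/n^2 \notin K_{m,D}$.}

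The key arithmetic step uses the structure of eventually periodic $m$-ary expansions. Since $\gcd(n^2,m)=1$, the rational $1/n^2$ has a purely periodic $m$-ary expansion of some period length $t$ (the multiplicative order of $m$ modulo $n^2$), with digit block $d_1 d_2 \cdots d_t$ satisfying
\[
\frac{1}{n^2} = \frac{d_1 m^{t-1} + d_2 m^{t-2} + \cdots + d_t}{m^t - 1}.
\]
Hence $n^2 \mid m^t - 1$, and more usefully $d_1 m^{t-1} + \cdots + d_t = (m^t-1)/n^2$. Now I would consider this identity modulo $n$ (or a suitable prime divisor). The crucial observation: if every digit $d_j$ lies in $D$ and we read the defining relation for a period, we get a congruence that forces a certain combination of the $d_j$ to be a quadratic residue (or not) modulo $n$. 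More precisely, multiplying through and using $m^t \equiv 1 \pmod{n^2}$, one extracts that the numerator, which is $(m^t-1)/n^2$, must be an integer; reducing the digit expansion relation modulo $n$ and telescoping, the condition $1/n^2 \in K_{m,D}$ forces $-d_j$ to be a quadratic residue mod $n$ for the relevant digits — contradicting the Legendre symbol hypothesis $\left(\frac{-a}{m}\right)_L = -1$ once we descend to a prime $p \mid n$ and use that $m$ itself generates the residues appropriately. I would make this precise by picking an odd prime $p \mid n$, noting $m$ is a unit mod $p$, and analyzing $1/n^2 \bmod p$-type information in the tail behavior of the expansion; the Legendre condition on all of $D\setminus\{0\}$ combined with multiplicativity of the symbol and quadratic reciprocity should rule out consistency.

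The main obstacle I anticipate is controlling the digits across a full period rather than just one digit: a single congruence mod $p$ constrains only one "slot," so I will need to iterate — apply the shift (i.e., the map $x \mapsto mx \bmod 1$, equivalently $\varphi_{d}^{-1}$) repeatedly and track how the condition "$m^k/n^2 \bmod 1$ has leading digit in $D$" propagates. The cleanest route is probably: suppose $1/n^2 \in K_{m,D}$, let $p$ be the smallest prime factor of $n$, and show that the set $S = \{ m^k \bmod p : k \ge 0\}$ of residues one encounters forces, at each step where the leading digit $d$ is nonzero, the relation $d \equiv -m^k \cdot (\text{something square}) \pmod p$, so that $\left(\frac{-d}{p}\right)_L$ is determined and must equal $+1$ for consistency — contradicting that it is $-1$ for every $d \in D\setminus\{0\}$ (after checking compatibility with $\left(\frac{-a}{m}\right)_L$ via the Chinese remainder theorem and the hypothesis that $m$ is prime). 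One also must separately handle the possibility that all digits in the period are $0$, which would force $1/n^2 = 0$, impossible for $n \ge 1$ finite — so some nonzero digit always appears, and the argument bites. Finally, assembling (1) versus (2) is bookkeeping: without $1 \in D$ even the reduced points $1/m^{2\ell}$ are excluded (their expansion has a digit $1$), giving the empty set; with $1 \in D$ exactly those survive.
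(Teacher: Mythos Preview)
Your reduction to the case $\gcd(n,m)=1$, $n\ge 2$, and your setting up of the purely periodic expansion
\[
\frac{1}{n^{2}} = \frac{d_1 m^{t-1} + d_2 m^{t-2} + \cdots + d_t}{m^t - 1},
\qquad\text{i.e.}\qquad
(d_1 m^{t-1}+\cdots+d_t)\,n^2 = m^t-1,
\]
are both fine and match the paper. The gap is in what you do next. You propose to reduce this identity modulo $n$, or modulo a prime $p\mid n$. But modulo any divisor of $n$ the left side vanishes and the relation collapses to $m^t\equiv 1$, which carries \emph{no information whatsoever about the digits $d_j$}. Your subsequent plan---extracting a condition like ``$-d_j$ is a quadratic residue mod $p$'' and then transporting it to a statement about $\left(\frac{-d_j}{m}\right)_L$ via reciprocity and CRT---has no starting point, and in any case there is no mechanism relating Legendre symbols modulo an arbitrary prime divisor of $n$ to the hypothesis, which concerns symbols modulo $m$.

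The fix is to reduce modulo the \emph{other} obvious modulus, namely $m$ itself. Every term on the left except the last has a factor of $m$, so the identity becomes
\[
d_t\, n^{2} \equiv -1 \pmod m .
\]
This forces $d_t\neq 0$, and since $n$ is a unit mod $m$ it gives $(d_t n)^2 \equiv -d_t \pmod m$, hence $\left(\frac{-d_t}{m}\right)_L = 1$. The hypothesis then says $d_t\notin D$, so $1/n^2\notin K_{m,D}$. That is the paper's argument: a single congruence mod $m$ pins down a single digit of the period, and that one digit already lies outside $D$. The iteration you anticipated, the control of a full period, and the appeal to reciprocity are all unnecessary. (One small side remark: the paper also records that $m-1\notin D$, since $\left(\frac{-(m-1)}{m}\right)_L=\left(\frac{1}{m}\right)_L=1$; this is what guarantees the $m$-ary expansion of $1/m^{2\ell}$ is unique and settles the dichotomy between cases (1) and (2).)
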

\begin{remark}
\begin{enumerate}[{\rm(a)}]
\item By \cite[Theorem 84]{Hardy2008}, there are $(p-1)/2$ quadratic residues and $(p-1)/2$ quadratic non-residues of an odd prime $p$ in the set $\{1,2,\ldots,p-1\}$. So, the cardinality of the set $D$ in Theorem \ref{sequence} can be at most $(m+1)/2$.
      By taking a sufficiently large odd prime $p$, applying Theorem \ref{sequence} (i) we can obtain that for any $\epsilon>0$, there exists a self-similar set $K_\ep \subset [0,1]$ with $0 = \min K_\ep$ such that $\dim_{H} K_\ep > 1-\epsilon$ and $$\bigg\{\dfrac{1}{n^2}:n\in \mathbb{N}\bigg\}\cap K_\ep =\emptyset.$$

\item  As a direct corollary of Theorem \ref{sequence}, we have $$\left\{\dfrac{1}{n^{2}}:n\in \mathbb{N}\right\}\cap K_{3,\{0,1\}} =\left\{\dfrac{1}{9^{\ell}}: \ell \in \mathbb{N} \right\}.$$
That is, $$\left\{\dfrac{2}{n^{2}}:n\in \mathbb{N}\right\}\cap C =\left\{\dfrac{2}{9^{\ell}}: \ell \in \mathbb{N} \right\}.$$


\item  
Using the almost identical argument in the proof of Theorem \ref{sequence}, we can deal with cubic sequence in the following concrete example:
$$\bigg\{ \frac{1}{n^3}: n \in \mathbb{N} \bigg\} \cap K_{7,\{0,2,3,4,5\}} = \emptyset.$$
\end{enumerate}
\end{remark}

The paper is organized as follows.
In Section \ref{sec:unit-circle}, we focus on the intersection with the unit circle and prove Theorem \ref{circle-lambda}.
We will prove Theorem \ref{curve} in Section \ref{sec:curve}.
Section \ref{sec:sequence} is dedicated to proving Theorem \ref{sequence} and some corollaries. Finally, we  list some questions  in Section \ref{sec:question}.

\section{Intersection with the unit circle}\label{sec:unit-circle}

In this section, we will prove Theorem \ref{circle-lambda}. Recall that $K_\lambda$ is a self-similar set generated by the IFS
$\big\{ f_0(x) = \lambda x ,\; f_1(x) = \lambda x + 1- \lambda \big\}$ for $\lambda \in (0,1/2)$.
For $\mathbf{i}=i_1i_2 \ldots i_n \in \{0,1\}^n$, define \[ f_{\mathbf{i}} = f_{i_1} \circ f_{i_2} \circ \cdots \circ f_{i_n}. \]
For $n\in \N$, the $n$-level basic intervals of $K_\lambda$ are defined by
$$\mathcal{F}_n:= \big\{ f_{\mathbf{i}}([0,1]): \mathbf{i} \in \{0,1\}^n \big\}.$$
For $I \in \mathcal{F}_n$ and $k \ge n$, we also define
$$\mathcal{F}_k^I:= \big\{ J: J\in \mathcal{F}_k,\; J \subset I \big\}.$$
For a collection of sets $\mathcal{F}$, let $\bigcup \mathcal{F}$ denote the union of all sets in $\mathcal{F}$.
Then we have $$K_\lambda = \bigcap_{n=1}^\f \bigcup \mathcal{F}_n  \quad\text{and}\quad K_\lambda \cap I = \bigcap_{k=n}^\f \bigcup \mathcal{F}_k^I.$$

Recall that $\mathbb S=\set{(x,y): x^2+y^2=1}$ is the unit circle. The proof of Theorem \ref{circle-lambda} will be split into three propositions.

\begin{proposition}\label{prop:circle-1}
For $0< \lambda \le 2 - \sqrt{3}$, we have $$\mathbb S\cap (K_\lambda\times K_\lambda) =\{(0,1), (1,0)\}. $$
\end{proposition}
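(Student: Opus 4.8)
The plan is to argue by contradiction. Suppose $(a,b)\in\mathbb{S}\cap(K_\lambda\times K_\lambda)$ with $(a,b)\notin\{(0,1),(1,0)\}$, so that $a,b\in(0,1)$ and $a^2+b^2=1$. Since $K_\lambda\subseteq[0,\lambda]\cup[1-\lambda,1]$, each of $a$ and $b$ lies in $(0,\lambda]$ or in $[1-\lambda,1)$. They cannot both lie in $(0,\lambda]$, since then $a^2+b^2\le 2\lambda^2<1$; and they cannot both lie in $[1-\lambda,1)$, because $\lambda\le 2-\sqrt{3}$ gives $1-\lambda\ge\sqrt{3}-1$ and hence $2(1-\lambda)^2\ge 8-4\sqrt{3}>1$. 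So exactly one coordinate lies in $(0,\lambda]$; using the symmetry of $\mathbb{S}$ in the line $y=x$ we may assume $a\in(0,\lambda]$ and $b\in[1-\lambda,1)$.

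Next I would use the reflection $x\mapsto 1-x$, which maps $K_\lambda$ onto itself (it interchanges $f_0$ and $f_1$), and set $\bar b:=1-b\in K_\lambda\cap(0,\lambda]$. The relation $a^2+b^2=1$ then reads $a^2=2\bar b-\bar b^2$, i.e.
\[
\bar b=1-\sqrt{1-a^2}=\frac{a^2}{1+\sqrt{1-a^2}}.
\]
Because $a\in K_\lambda\cap(0,\lambda]$, there is a unique $m\in\mathbb{N}$ with $a\in f_{0^m}([1-\lambda,1])=[\lambda^m(1-\lambda),\lambda^m]$, so that $\lambda^m(1-\lambda)\le a\le\lambda^m$. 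Feeding these two bounds into the formula for $\bar b$ (and using $\lambda^{2m}\le\lambda^2$ for the upper estimate) I expect to obtain
\[
\lambda^{2m+1}<\bar b<\lambda^{2m}(1-\lambda).
\]
But the open interval $(\lambda^{2m+1},\lambda^{2m}(1-\lambda))$ is the image of the first-level gap $(\lambda,1-\lambda)$ of $K_\lambda$ under the contraction $x\mapsto\lambda^{2m}x$, hence is itself a gap of $K_\lambda$ and contains no point of $K_\lambda$. This contradicts $\bar b\in K_\lambda$, which finishes the proof.

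The only step that needs real care is the pair of elementary inequalities that trap $\bar b$ strictly inside that gap, uniformly in $m\ge 1$. The upper bound $\bar b<\lambda^{2m}(1-\lambda)$ reduces to $1<(1-\lambda)\big(1+\sqrt{1-\lambda^2}\big)$, which holds with room to spare since $\sqrt{1-\lambda^2}\ge 1-\lambda$ and $2-3\lambda+\lambda^2>1$ for $\lambda<(3-\sqrt{5})/2$. The lower bound is the tight one: from $\bar b>a^2/2\ge\lambda^{2m}(1-\lambda)^2/2$ one needs $(1-\lambda)^2\ge 2\lambda$, that is $\lambda^2-4\lambda+1\ge 0$, and $2-\sqrt{3}$ is exactly the relevant root of this quadratic. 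This is precisely where the hypothesis $\lambda\le 2-\sqrt{3}$ enters, and it dovetails with the sharpness observed in Remark (b): the point $(\lambda^k(1-\lambda),\,1-\lambda^{2k+1})$ puts $\bar b=\lambda^{2k+1}$ exactly at the left endpoint of the gap, precisely when $\lambda^{2k+2}+\lambda^2-4\lambda+1=0$. At the boundary value $\lambda=2-\sqrt{3}$ one still obtains the strict inequality $\bar b>\lambda^{2m+1}$, because $a>0$ makes $a^2/(1+\sqrt{1-a^2})>a^2/2$ strict.
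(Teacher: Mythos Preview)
Your proof is correct and is essentially the same argument as the paper's: both show that if the small coordinate lies in the level-$m$ block $[\lambda^m(1-\lambda),\lambda^m]$ then the large coordinate is forced into the gap between $f_{1^{2m}0}([0,1])$ and $f_{1^{2m+1}}([0,1])$, with the tight case reducing to the quadratic $\lambda^2-4\lambda+1\ge 0$. The only cosmetic differences are that the paper runs this as an induction on $k$ (pushing $x$ into $[0,\lambda^k]$ for all $k$) and works directly with $y$, whereas you locate $m$ in one shot and use the reflection $\bar b=1-b$ together with the closed form $\bar b=a^2/(1+\sqrt{1-a^2})$; the paper's upper-bound step is the purely polynomial estimate $\lambda^{2k}+(1-\lambda^{2k}+\lambda^{2k+1})^2<1$ in place of your $(1-\lambda)(1+\sqrt{1-\lambda^2})>1$.
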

\begin{proof}
  Take a point $(x,y)\in\mathbb S\cap(K_\lambda\times K_\lambda)$.  By symmetry, we assume $x \le y$. It suffices to show $x=0$.

  First, we show $x \in f_0([0,1])$.
  Observe that $$x \in K_\lambda = f_0(K_\lambda) \cup f_1(K_\lambda) \subset f_0([0,1]) \cup f_1([0,1]).$$
  Suppose on the contrary that $x \in f_1([0,1])$. Then $x \ge 1-\lambda$, and hence we have $$x^2 + y^2 \ge 2 x^2 \ge 2(1-\lambda)^2\ge 2(\sqrt{3}-1)^2 > 1.$$
  This leads to a contradiction with $x^2 + y^2 =1$.

  Next, assuming $x \in f_{0^k}([0,1])$ for some $k \in \N$ we show $x\in f_{0^{k+1}}([0,1])$.
  If this were done, then by induction we would conclude that $x\in f_{0^\ell}([0,1])$ for all $\ell \in \N$, and thus $x=0$.

  Notice that $$x\in f_{0^{k}}([0,1]) \cap K_\lambda  \subset f_{0^{k+1}}([0,1]) \cup f_{0^{k}1}([0,1]).$$
  Suppose on the contrary that $x \in f_{0^{k}1}([0,1])$. Then we have $$\lambda^k(1-\lambda) \le x \le \lambda^k.$$
  If $y \in f_{1^{2k+1}}([0,1])$, then we have $y\ge 1-\lambda^{2k+1}$, and $$x^2 + y^2 \ge \lambda^{2k}(1-\lambda)^2 + (1-\lambda^{2k+1})^2 = 1 + \lambda^{4k+2} + (\lambda^2 - 4\lambda +1)\lambda^{2k} > 1,$$
  where the last inequality follows from $0< \lambda \le 2 - \sqrt{3}$.
  If $y \not\in f_{1^{2k+1}}([0,1])$, then we have $y \le 1- \lambda^{2k} + \lambda^{2k+1}$ because $y \in K_\lambda$.
  Using $0< \lambda \le 2 - \sqrt{3}$, we get
  \begin{align*}
    x^2 + y^2
    & \le \lambda^{2k} + (1-\lambda^{2k}+ \lambda^{2k+1})^2 \\
    & = 1-(1-2\lambda)\lambda^{2k} + (1-\lambda)^2 \lambda^{4k} \\
    & < 1- (1-2\lambda)\lambda ^{2k}+(1-\lambda )^2 \lambda^{2k+1} \\
    & < 1.
  \end{align*}
  Thus, we obtain $x^2 + y^2 \ne 1$, a contradiction.
  Hence, we conclude $$x \in f_{0^{k+1}}([0,1]),$$ completing the proof.
\end{proof}

Note that $\mathcal F_n$ consists of $2^n$ pairwise disjoint intervals, and   each of them is of the form $[a, a+\lambda^n]$ with
\[
a\in\bigg\{(1-\lambda)\sum_{j=1}^{n}d_j\lambda^{j-1}: d_j\in\set{0, 1}~\forall 1\le j\le n \bigg\}.
\]
In the remainder of this section, we always assume that $2-\sqrt{3} < \lambda < 1/2$, and write $g(x,y) = x^2 + y^2$.
\begin{lemma}\label{lem:check-condition}
Let $I,J \in \mathcal{F}_n$ with $I=[a,a+\lambda^n]$, $J=[b, b+\lambda^n]$, and $a \le b$.
Suppose that
$$\frac{1-2\lambda}{\lambda} (a+\lambda^n) \le  b \le \frac{a}{1-2\lambda}.$$
Then we have $$g(I\times J) = g\big( (I_1 \cup I_2) \times(J_1 \cup J_2) \big),$$
where $$I_1 = [a, a + \lambda^{n+1}],\quad I_2 = [a+(1-\lambda)\lambda^n, a+\lambda^n ],$$
  and
  $$J_1 = [b, b + \lambda^{n+1}],\quad J_2 = [b+(1-\lambda)\lambda^n, b+\lambda^n ].$$
\end{lemma}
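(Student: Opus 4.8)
The plan is to prove the two inclusions separately. Since $I_1\cup I_2\subseteq I$ and $J_1\cup J_2\subseteq J$, the inclusion $g\big((I_1\cup I_2)\times(J_1\cup J_2)\big)\subseteq g(I\times J)$ is automatic, so all the content lies in the reverse inclusion. The starting observation is that $g(x,y)=x^2+y^2$ is nondecreasing in each variable on $[0,\infty)^2$, and every interval in sight lies in $[0,1]$ with nonnegative endpoints. Hence for any axis-parallel rectangle $[\alpha,\beta]\times[\gamma,\delta]\subseteq[0,1]^2$ the image $g\big([\alpha,\beta]\times[\gamma,\delta]\big)$ is exactly the closed interval $\big[g(\alpha,\gamma),\,g(\beta,\delta)\big]$: it is a continuous image of a compact connected set, hence a closed bounded interval, and by monotonicity its minimum and maximum are attained at the two diagonally opposite corners. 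Applying this to $I\times J$ and to the four subrectangles $I_i\times J_j$ with $i,j\in\{1,2\}$, the identity to be proved becomes the statement that the union of the four closed intervals $g(I_i\times J_j)$, whose endpoints can be read off from the corners, equals $\big[a^2+b^2,\,(a+\lambda^n)^2+(b+\lambda^n)^2\big]=g(I\times J)$.

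\textbf{Ordering the four intervals.} Using only $a\le b$, a direct comparison of endpoints shows that both the list of left endpoints and the list of right endpoints of
$g(I_1\times J_1),\; g(I_2\times J_1),\; g(I_1\times J_2),\; g(I_2\times J_2)$
are nondecreasing in this order; the only comparisons that actually use $a\le b$ are the ones between $I_2\times J_1$ and $I_1\times J_2$, the rest being monotonicity of $g$ in one coordinate. Moreover the smallest left endpoint is $a^2+b^2$ and the largest right endpoint is $(a+\lambda^n)^2+(b+\lambda^n)^2$, matching those of $g(I\times J)$. Therefore it suffices to check that consecutive intervals in this chain overlap or touch, i.e.\ to verify
\begin{align*}
\text{(a)}\ \ & \max g(I_1\times J_1)\ \ge\ \min g(I_2\times J_1),\\
\text{(b)}\ \ & \max g(I_2\times J_1)\ \ge\ \min g(I_1\times J_2),\\
\text{(c)}\ \ & \max g(I_1\times J_2)\ \ge\ \min g(I_2\times J_2).
\end{align*}
Granting (a)--(c), a straightforward chaining of the four intervals (each starts no later than the previous one ends, and ends no earlier) shows their union is $\big[a^2+b^2,(a+\lambda^n)^2+(b+\lambda^n)^2\big]$, as wanted. (The four subrectangles are in fact pairwise separated since $(1-\lambda)\lambda^n>\lambda^{n+1}$, but this separation plays no role; the point of the lemma is precisely that no value of $g$ is lost on passing to level $n+1$.)

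\textbf{Verifying the overlaps.} Each of (a)--(c) is a polynomial inequality in $a,b$. Expanding the squares, using the identity $\lambda^{n+1}+(1-\lambda)\lambda^n=\lambda^n$, and cancelling the positive factor $\lambda^n$, inequality (a) reduces to $b\ge\frac{1-2\lambda}{\lambda}a+\frac{1-2\lambda-\lambda^2}{2\lambda}\lambda^n$, inequality (c) reduces to $b\ge\frac{1-2\lambda}{\lambda}a+\frac{1-4\lambda+\lambda^2}{2\lambda}\lambda^n$, and inequality (b) reduces to $(1-2\lambda)b\le a+\lambda^{n+1}$ (note $1-2\lambda>0$ as $\lambda<1/2$). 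Then (b) is immediate from the upper hypothesis $b\le\frac{a}{1-2\lambda}$ because $\lambda^{n+1}\ge0$. For (a) and (c) I invoke the lower hypothesis in the form $b\ge\frac{1-2\lambda}{\lambda}a+\frac{1-2\lambda}{\lambda}\lambda^n$, so it only remains to note $\frac{1-2\lambda}{\lambda}\ge\frac{1-2\lambda-\lambda^2}{2\lambda}$ and $\frac{1-2\lambda}{\lambda}\ge\frac{1-4\lambda+\lambda^2}{2\lambda}$, which simplify to $(1-\lambda)^2\ge0$ and $1-\lambda^2\ge0$ respectively. This closes the argument.

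\textbf{Main obstacle.} Conceptually the proof is routine; the only delicate point is the bookkeeping in the last step: expanding the three overlap inequalities without sign errors and checking that the two hypotheses supply exactly the slack needed, carried by the nonnegative quantities $(1-\lambda)^2$, $1-\lambda^2$, and $\lambda^{n+1}$. Indeed, the bounds on $b$ in the statement are tailored so that (a), (b), (c) hold; apart from the trivial fact $1-2\lambda>0$ (needed only for the hypotheses to be meaningful), the standing assumption $2-\sqrt{3}<\lambda<1/2$ is not used in this lemma.
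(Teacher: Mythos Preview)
Your proof is correct and follows the same approach as the paper: compute the four image intervals $g(I_i\times J_j)$, arrange them in a chain ordered by left endpoint, and verify the three consecutive-overlap inequalities from the two hypotheses on $b$. The one substantive difference is that the paper invokes the standing assumption $\lambda>2-\sqrt{3}$ to force the constant $1-4\lambda+\lambda^2$ in the third overlap inequality to be negative (so that only nonnegativity of the right-hand side is needed), whereas your sharper reduction of (c) to $1-\lambda^2\ge 0$ shows, as you correctly observe, that the lemma actually holds for every $\lambda\in(0,1/2)$.
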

Before the proof we point out that $\mathcal F_{n+1}^I=\set{I_1, I_2}$ and $\mathcal F_{n+1}^J=\set{J_1, J_2}$.
\begin{proof}
  It is easy to calculate that
  \begin{align*}
    g(I_1\times J_1) & = [a^2+b^2, a^2+b^2+ 2(a+b)\lambda^{n+1} + 2\lambda^{2n+2}], \\
    g(I_2 \times J_1) & = [a^2+b^2+ 2a (1-\lambda)\lambda^n +(1-\lambda)^2\lambda^{2n}, \\
    & \hspace{8em} a^2+b^2+ 2(a+b\lambda)\lambda^{n} + (1+\lambda^2)\lambda^{2n}], \\
    g(I_1\times J_2) & = [a^2+b^2+ 2b (1-\lambda)\lambda^n +(1-\lambda)^2\lambda^{2n}, \\
    & \hspace{8em} a^2+b^2+ 2(a\lambda+b)\lambda^{n} + (1+\lambda^2)\lambda^{2n}], \\
    g(I_2\times J_2) & = [a^2+b^2+ 2(a+b) (1-\lambda)\lambda^n + 2(1-\lambda)^2\lambda^{2n}, \\
    & \hspace{12em} a^2+b^2+ 2(a+b)\lambda^{n} + 2\lambda^{2n}].
  \end{align*}
  It remains to verify that the above four intervals are overlapping, which is equivalent to
  \begin{equation}\label{eq:equivalent-condition}
  \left\{
    \begin{array}{l}
      -(\lambda-1)^2\lambda^n \le 2b\lambda - 2(a+\lambda^n)(1-2\lambda),  \\
      -\lambda^{n+1} \le a- b(1-2\lambda),  \\
      (1-4\lambda+\lambda^2)\lambda^n \le 2b \lambda - 2a(1-2\lambda).
    \end{array}
  \right.
  \end{equation}
For $2-\sqrt{3}< \lambda < 1/2$, the left-hand sides of all three inequalities in (\ref{eq:equivalent-condition}) are negative. Thus, we only need
$$b\lambda - (a+\lambda^n)(1-2\lambda) \ge 0 \quad\textrm{and}\quad a- b(1-2\lambda) \ge 0.$$
That is, $$\frac{1-2\lambda}{\lambda} (a+\lambda^n) \le  b \le \frac{a}{1-2\lambda},$$
as desired.
\end{proof}

The following lemma is an easy exercise in real analysis, and we only state it without a detailed proof.
\begin{lemma}\label{lem:continuity}
Let $f:\R^2 \to \R$ be a continuous function, and let $\{F_n\}_{n=1}^\f$ be a decreasing sequence of nonempty compact subsets of $\R^2$. Then we have
$$f\bigg( \bigcap_{n=1}^\f F_n \bigg) = \bigcap_{n=1}^\f f(F_n).$$
\end{lemma}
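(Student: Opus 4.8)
The plan is to prove the asserted equality by establishing the two inclusions separately, the nontrivial direction resting on a standard Cantor-intersection argument for nested compact sets.

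First I would dispatch the easy inclusion $f\bigl(\bigcap_{n=1}^\f F_n\bigr)\subseteq\bigcap_{n=1}^\f f(F_n)$. If $x\in\bigcap_{n=1}^\f F_n$, then $x\in F_n$ for every $n$, so $f(x)\in f(F_n)$ for every $n$, whence $f(x)\in\bigcap_{n=1}^\f f(F_n)$. This uses only the monotonicity of the image map and no topological hypothesis at all.

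For the reverse inclusion I would fix $y\in\bigcap_{n=1}^\f f(F_n)$ and pass to the auxiliary sets $G_n:=F_n\cap f^{-1}(\{y\})$. Since $f$ is continuous, $f^{-1}(\{y\})$ is closed in $\R^2$, so each $G_n$ is a closed subset of the compact set $F_n$ and hence compact; moreover $G_n\ne\emptyset$ precisely because $y\in f(F_n)$. As $\{F_n\}$ is decreasing, so is $\{G_n\}$. By Cantor's intersection theorem (a decreasing sequence of nonempty compact sets has nonempty intersection), we may pick $x\in\bigcap_{n=1}^\f G_n$. Then $x\in\bigcap_{n=1}^\f F_n$ and $f(x)=y$, so $y\in f\bigl(\bigcap_{n=1}^\f F_n\bigr)$. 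Combining the two inclusions yields the claim.

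The only step that needs any attention is the nonemptiness of $\bigcap_{n=1}^\f G_n$, where compactness — equivalently, the finite intersection property — is genuinely used; without it the equality can fail for the $\supseteq$ direction. But this is exactly the content of Cantor's intersection theorem, so there is no real obstacle, which is why the statement is flagged as an easy exercise; in the write-up I would simply cite that theorem and keep the proof to a few lines.
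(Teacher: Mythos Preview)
Your proof is correct and is the standard argument; the paper itself omits the proof entirely, calling the lemma ``an easy exercise in real analysis,'' so there is no approach to compare against beyond noting that your Cantor-intersection argument is precisely what the authors had in mind.
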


\begin{lemma}\label{lem:find-nontrivial}
Let $I,J \in \mathcal{F}_n$ with $I=[a,a+\lambda^n]$, $J=[b, b+\lambda^n]$, and $a \le b$.
Suppose that
$$\frac{1-2\lambda}{\lambda}(a+\lambda^n) \le b < b+\lambda^n \le \frac{a}{1-2\lambda}.$$
Then we have $$g\big( (K_\lambda \cap I) \times (K_\lambda \cap J) \big) = g(I\times J).$$
\end{lemma}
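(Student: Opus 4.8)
The plan is to push Lemma~\ref{lem:check-condition} down through every refinement level $k\ge n$ and then pass to the limit via Lemma~\ref{lem:continuity}. Call a pair $(P,Q)$ of intervals in $\mathcal F_k$, written $P=[p,p+\lambda^k]$ and $Q=[q,q+\lambda^k]$ with $\alpha:=\min\set{p,q}$ and $\beta:=\max\set{p,q}$, an \emph{admissible pair} if
\[
\frac{1-2\lambda}{\lambda}\,(\alpha+\lambda^k)\le\beta\qquad\text{and}\qquad \beta+\lambda^k\le\frac{\alpha}{1-2\lambda}.
\]
This notion is symmetric in $P$ and $Q$; the hypothesis of the lemma says precisely that $(I,J)$ is an admissible pair; and since the second inequality forces $\beta\le\alpha/(1-2\lambda)$, every admissible pair satisfies the hypothesis of Lemma~\ref{lem:check-condition} after possibly interchanging $P$ and $Q$. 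Because $g$ is symmetric, Lemma~\ref{lem:check-condition} then yields $g(P\times Q)=g\big((\bigcup\mathcal F_{k+1}^{P})\times(\bigcup\mathcal F_{k+1}^{Q})\big)$ for every admissible pair $(P,Q)$ in $\mathcal F_k$.

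The heart of the argument is the following closure property: \emph{if $(P,Q)$ is an admissible pair in $\mathcal F_k$, then $(P',Q')$ is an admissible pair in $\mathcal F_{k+1}$ for every $P'\in\mathcal F_{k+1}^{P}$ and $Q'\in\mathcal F_{k+1}^{Q}$.} To prove it, assume $p\le q$ and set $p^{+}:=p+(1-\lambda)\lambda^k$, $q^{+}:=q+(1-\lambda)\lambda^k$, so the children of $P$ are $[p,p+\lambda^{k+1}]$ and $[p^{+},p^{+}+\lambda^{k+1}]$, and similarly for $Q$; note the identities $p^{+}+\lambda^{k+1}=p+\lambda^k$ and $q^{+}+\lambda^{k+1}=q+\lambda^k$. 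After ordering each of the four child pairs by its left endpoint, three of them read $(p,q)$, $(p,q^{+})$, $(p^{+},q^{+})$ (using $p\le q$), and the remaining one is $(p^{+},q)$ provided $p^{+}\le q$. If instead $p^{+}>q$, then combined with $p\le q$ this would force $P$ and $Q$ to intersect; but distinct elements of $\mathcal F_k$ are disjoint, so $P=Q$ and the fourth child pair again reduces, after ordering, to $(p,p^{+})$. For each of these ordered child pairs the two admissibility inequalities at level $k+1$ follow from those of $(P,Q)$ by a one-line substitution, using the displayed identities (which make the upper bound invariant under passing to children), the inequality $\lambda^{k+1}<\lambda^k$, and $0<\lambda<1/2$ (so $(1-2\lambda)/\lambda>0$). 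This proves the closure property.

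Granting it, an induction on $k\ge n$ establishes simultaneously that every pair in $\mathcal F_k^{I}\times\mathcal F_k^{J}$ is admissible and that $g(I\times J)=g\big((\bigcup\mathcal F_k^{I})\times(\bigcup\mathcal F_k^{J})\big)$. The base case $k=n$ is the hypothesis. For the inductive step one applies Lemma~\ref{lem:check-condition} to each admissible pair in $\mathcal F_k^{I}\times\mathcal F_k^{J}$ and unions the resulting identities, using that $g$ commutes with finite unions and that $\mathcal F_{k+1}^{I}$ is the disjoint union of $\mathcal F_{k+1}^{P}$ over $P\in\mathcal F_k^{I}$ (and similarly for $J$); the admissibility statement at level $k+1$ is exactly the closure property. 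Now put $E_k:=\bigcup\mathcal F_k^{I}$, $E_k':=\bigcup\mathcal F_k^{J}$, and $F_k:=E_k\times E_k'$ for $k\ge n$. Then $\set{F_k}_{k\ge n}$ is a decreasing sequence of nonempty compact subsets of $\R^2$ with $\bigcap_{k\ge n}F_k=(\bigcap_{k\ge n}E_k)\times(\bigcap_{k\ge n}E_k')=(K_\lambda\cap I)\times(K_\lambda\cap J)$, using $K_\lambda\cap I=\bigcap_{k\ge n}\bigcup\mathcal F_k^{I}$. Since $g$ is continuous and $g(F_k)=g(I\times J)$ for every $k\ge n$, Lemma~\ref{lem:continuity} gives $g\big((K_\lambda\cap I)\times(K_\lambda\cap J)\big)=\bigcap_{k\ge n}g(F_k)=g(I\times J)$, as required.

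I expect the only genuine obstacle to be the closure property, and within it the one delicate point: excluding the configuration $p^{+}>q$, in which the outer child of $P$ would sit to the right of the inner child of $Q$. This can occur only when $P=Q$; in all other cases it is ruled out by the disjointness of distinct level-$k$ basic intervals of $K_\lambda$, a feature of $\lambda<1/2$. Once that case is isolated, the remaining inequality checks are routine, since the identities $p^{+}+\lambda^{k+1}=p+\lambda^k$ and $q^{+}+\lambda^{k+1}=q+\lambda^k$ keep the upper admissibility bound invariant under passing to children while the lower bound only relaxes.
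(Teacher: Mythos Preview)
Your proof is correct and follows essentially the same strategy as the paper: verify that the hypothesis of Lemma~\ref{lem:check-condition} persists for every sub-pair $(I',J')\in\mathcal F_k^{I}\times\mathcal F_k^{J}$, deduce $g(I\times J)=g\big((\bigcup\mathcal F_k^{I})\times(\bigcup\mathcal F_k^{J})\big)$ for all $k\ge n$, and then invoke Lemma~\ref{lem:continuity}. The only cosmetic difference is that the paper checks this persistence in one stroke---using $a\le a'$, $a'+\lambda^k\le a+\lambda^n$ (and likewise for $b'$) directly from $I'\subset I$, $J'\subset J$---rather than inductively via your closure property, and its case split $a<b$ versus $a=b$ plays exactly the role of your symmetric notion of admissible pair.
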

\begin{proof}
Fix $k \ge n$ and take $I'=[a',a'+\lambda^k] \in \mathcal{F}_k^{I}$ and  $J'=[b',b'+\lambda^k] \in \mathcal{F}_k^{J}$ with $a' \le b'$.
Note that $a\le a' \le a+\lambda^n-\lambda^k$ and $b\le b' \le b+\lambda^n-\lambda^k$.
Using our assumptions, we clearly have $$\frac{1-2\lambda}{\lambda}(a'+\lambda^k) \le \frac{1-2\lambda}{\lambda}(a+\lambda^n) \le b\le b',$$
and
$$b' <b+\lambda^n \le \frac{a}{1-2\lambda}\le \frac{a'}{1-2\lambda}.$$
Thus, by Lemma \ref{lem:check-condition} we obtain
\begin{equation}\label{eq:induction-g}
  g(I' \times J') = g\Big( \big(\bigcup \mathcal{F}_{k+1}^{I'}\big) \times \big(\bigcup \mathcal{F}_{k+1}^{J'}\big) \Big).
\end{equation}

If $a < b$, then for any $I'=[a',a'+\lambda^k] \in \mathcal{F}_k^{I}$, $J'=[b',b'+\lambda^k] \in \mathcal{F}_k^{J}$ we have $a' < b'$.
Thus, by (\ref{eq:induction-g}) we obtain
\begin{align*}
g\bigg( \Big(\bigcup \mathcal{F}_k^{I} \Big) \times \Big(\bigcup\mathcal{F}_k^{J}\Big) \bigg) & = \bigcup_{ I' \in \mathcal{F}_k^{I} } \bigcup_{ J' \in \mathcal{F}_k^{J} } g(I' \times J') \\
& = \bigcup_{ I' \in \mathcal{F}_k^{I} } \bigcup_{ J' \in \mathcal{F}_k^{J} } g\Big( \big(\bigcup \mathcal{F}_{k+1}^{I'}\big) \times \big(\bigcup \mathcal{F}_{k+1}^{J'}\big) \Big) \\
& = g\bigg( \Big(\bigcup \mathcal{F}_{k+1}^{I} \Big) \times \Big(\bigcup\mathcal{F}_{k+1}^{J}\Big) \bigg).
\end{align*}
If $a = b$, then $I=J$ and let $\mathscr{F}_k$ be the collection of pairs $(I',J') \in \mathcal{F}_k^{I} \times \mathcal{F}_k^{J}$ with $I'=[a',a'+\lambda^k]$, $J'=[b',b'+\lambda^k]$, and $a' \le b'$.
Note that $g(x,y) = g(y,x)$.
By (\ref{eq:induction-g}) We also have
\begin{align*}
g\bigg( \Big(\bigcup \mathcal{F}_k^{I} \Big) \times \Big(\bigcup\mathcal{F}_k^{J}\Big) \bigg)
& = \bigcup_{ (I',J') \in \mathscr{F}_k } g(I' \times J') \\
& = \bigcup_{ (I',J') \in \mathscr{F}_k } g\Big( \big(\bigcup \mathcal{F}_{k+1}^{I'}\big) \times \big(\bigcup \mathcal{F}_{k+1}^{J'}\big) \Big) \\
& = g\bigg( \Big(\bigcup \mathcal{F}_{k+1}^{I} \Big) \times \Big(\bigcup\mathcal{F}_{k+1}^{J}\Big) \bigg).
\end{align*}

Finally, by Lemma \ref{lem:continuity}, we conclude that
\begin{align*}
g(I\times J) & = \bigcap_{k=n}^\f g\bigg( \Big(\bigcup \mathcal{F}_k^{I} \Big) \times \Big(\bigcup\mathcal{F}_k^{J}\Big) \bigg) \\
& = g\bigg( \bigcap_{k=n}^\f \Big(\bigcup \mathcal{F}_k^{I} \Big) \times \Big(\bigcup\mathcal{F}_k^{J}\Big) \bigg) \\
& = g\Big( \big( K_\lambda \cap I \big) \times \big( K_\lambda \cap J \big) \Big),
\end{align*}
as desired.
\end{proof}

\begin{proposition}\label{prop:circle-2}
  For $0.330384 \le \lambda < 1/2$, we have $$\# \big( \mathbb{S} \cap (K_\lambda\times K_\lambda) \big) >2 . $$
\end{proposition}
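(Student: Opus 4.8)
The plan is to exhibit a concrete nontrivial intersection point whenever $\lambda \ge 0.330384$, by finding a pair of first- or second-level basic intervals $I, J$ on which the hypotheses of Lemma~\ref{lem:find-nontrivial} are satisfied and whose image $g(I\times J)$ under $g(x,y)=x^2+y^2$ contains the value $1$. Once such $I,J$ are located, Lemma~\ref{lem:find-nontrivial} gives $1 \in g(I\times J) = g\big((K_\lambda\cap I)\times(K_\lambda\cap J)\big)$, so there is a point $(x,y) \in (K_\lambda\cap I)\times(K_\lambda\cap J)$ with $x^2+y^2=1$; since $I\times J$ avoids both $(0,1)$ and $(1,0)$ (for a suitable choice), this point is nontrivial and the proposition follows.

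First I would split the range $[0.330384, 1/2)$ into finitely many subintervals and, on each, choose a fixed symbolic pattern for $I$ and $J$. A natural first attempt is to take $I = J$ to be the level-$n$ interval closest to $1/\sqrt{2}$, i.e.\ $I = f_{\mathbf{i}}([0,1])$ for a short word $\mathbf{i} \in \{0,1\}^n$ chosen so that $a = a(\lambda)$ and $a+\lambda^n$ straddle (or sit near) $1/\sqrt 2$; then $g(I\times I) = [2a^2,\, 2(a+\lambda^n)^2]$ visibly contains $1$ as soon as $a \le 1/\sqrt2 \le a+\lambda^n$. The work is then to check the two-sided inequality
\[
\frac{1-2\lambda}{\lambda}(a+\lambda^n) \le a < a+\lambda^n \le \frac{a}{1-2\lambda}
\]
from Lemma~\ref{lem:find-nontrivial} (here with $b=a$, $J=I$). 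The left inequality becomes $(1-2\lambda)(a+\lambda^n) \le \lambda a$, and the right becomes $(1-2\lambda)(a+\lambda^n) \le a$, so the left one is the binding constraint; it says the interval $I$ must not be too far to the right, which competes with the requirement $a+\lambda^n \ge 1/\sqrt2$. For $\lambda$ close to $1/2$ this is easy, but near $\lambda = 0.330384$ one likely needs $n=2$ or $n=3$ and a carefully chosen word, and possibly $I \ne J$ (taking $I$ slightly left and $J$ slightly right of the diagonal, so that $g(I\times J)$ still covers $1$ while both endpoint constraints $\tfrac{1-2\lambda}{\lambda}(a+\lambda^n)\le b$ and $b+\lambda^n \le \tfrac{a}{1-2\lambda}$ hold). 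I would tabulate, for each candidate pattern, the resulting inequalities in $\lambda$, solve them, and verify the union of the $\lambda$-ranges covers $[0.330384,1/2)$; the explicit constant $0.330384$ is presumably the largest root of whichever polynomial inequality is tightest for the best available pattern.

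The main obstacle is the bookkeeping near the lower endpoint $\lambda \approx 0.330384$: one must produce a symbolic word for which \emph{simultaneously} (a) $g(I\times J) \ni 1$, i.e.\ the interval $I\times J$ actually meets the circle, and (b) the overlap condition of Lemma~\ref{lem:find-nontrivial} holds so that the image of the Cantor set fills the whole interval $g(I\times J)$ — and a single fixed word will not work for the entire range, so a short case analysis (a few subintervals of $\lambda$, each with its own word) is needed. A clean way to organize this is to note that condition (b) only needs to be checked at level $n$ once, since Lemma~\ref{lem:find-nontrivial} already propagates it; so the entire proof reduces to: for each of finitely many $\lambda$-subintervals, name a pair $(\mathbf{i},\mathbf{j})$ of words, write down the three explicit inequalities, and confirm they hold — a routine but slightly tedious verification that I would present as a short table rather than grinding through by hand.
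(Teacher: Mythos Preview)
Your outline is essentially the paper's own argument: it too invokes Lemma~\ref{lem:find-nontrivial}, splits the range into two pieces, and on each exhibits explicit basic intervals with $1\in g(I\times J)$ --- namely $I=J=f_{10}([0,1])$ for $9/25\le\lambda<1/2$ and $I=f_{100}([0,1])$, $J=f_{101}([0,1])$ for $0.330384\le\lambda<9/25$, the constant $0.330384$ being (approximately) the root of $(1-\lambda)^2+(1-\lambda+\lambda^2-\lambda^3)^2=1$. One small slip in your heuristic: for $\lambda>1/3$ the binding left constraint $(1-2\lambda)(a+\lambda^n)\le\lambda a$ is actually a \emph{lower} bound on $a$, so what it competes with is $a\le 1/\sqrt{2}$, not $a+\lambda^n\ge 1/\sqrt{2}$.
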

\begin{proof}
Suppose that $I,J \in \mathcal{F}_n$ satisfy all the conditions in Lemma \ref{lem:find-nontrivial}, and moreover, we have $1\in g(I,J)$.
Then by Lemma \ref{lem:find-nontrivial}, we have $1\in g\big( ( K_\lambda \cap I ) \times ( K_\lambda \cap J ) \big)$.
This means that there exists $(x,y) \in (K_\lambda \cap I) \times (K_\lambda \cap J)$ such that $x^2+y^2=1$.
Thus, we only need to find desired $n$-level basic intervals $I,J \in \mathcal{F}_n$ for some $n \in \N$.

(i) For $9/25 \le \lambda < 1/2$, choose $I=J=[1-\lambda, 1-\lambda+\lambda^2]$.
That is, $n=2$ and $a=b= 1-\lambda$ in Lemma \ref{lem:find-nontrivial}.
Note that $2(1-\lambda)^2 \le 512/625 < 1$ and $2(1-\lambda+\lambda^2)^2 \ge 9/8 > 1$.
This implies that $1 \in g(I\times J)$.
We clearly have $b+\lambda^2 < 1 < a/(1-2\lambda)$.
By Lemma \ref{lem:find-nontrivial}, it remains to verify
$$\frac{1-2\lambda}{\lambda}(1-\lambda+\lambda^2)\le 1-\lambda,$$
i.e., $2\lambda^3 -4\lambda^2 +4\lambda -1 \ge 0$, which can be easily checked.

(ii) For $ 0.330384 \le \lambda < 9/25$, choose $I=[1-\lambda, 1-\lambda+\lambda^3]$, and $J=[1-\lambda+\lambda^2 - \lambda^3, 1-\lambda+\lambda^2]$.
That is, $n=3$, $a=1-\lambda$, and $b= 1-\lambda+\lambda^2 - \lambda^3$ in Lemma \ref{lem:find-nontrivial}.
We clearly have $b+\lambda^3 < 1 < a/(1-2\lambda)$.
By Lemma \ref{lem:find-nontrivial}, we need
$$ \frac{1-2\lambda}{\lambda} (1-\lambda+ \lambda^3) \le 1-\lambda+\lambda^2 - \lambda^3,$$
i.e., $\lambda^4 - 3\lambda^2 + 4 \lambda -1 \ge 0$, which can be checked directly.
Note that $(1-\lambda+\lambda^3)^2 + (1-\lambda+\lambda^2)^2 > (1-\lambda)^2 + (1-\lambda+\lambda^2)^2 >1$ for $0< \lambda \le 9/25$.
In order to show $1 \in g(I\times J)$, it remains to check
\begin{equation}\label{eq:desired-condition}
  (1-\lambda)^2+(1-\lambda+\lambda^2 - \lambda^3)^2 \le 1.
\end{equation}
Observe that the left-hand side in (\ref{eq:desired-condition}) is decreasing for $0< \lambda <1/2$.
Thus, one can easily verify (\ref{eq:desired-condition}) by plugging $\lambda =0.330384$ into the inequality.
\end{proof}

Finally, we will prove Theorem \ref{circle-lambda} (iii) by using the binary branching argument.
For $I,J \in \mathcal{F}_n$, we define \emph{the set of double covering points} $\mathcal{G}(I,J)$ to be the set of $z \in g(I\times J)$ satisfying that there exists $k > n$ and two distinct pairs $(I_1,J_1),(I_2,J_2) \in \mathcal{F}_k^I \times \mathcal{F}_k^J$ such that $z$ is an interior point of both $g(I_1\times J_1)$ and $g(I_2\times J_2)$.
This means that the point $z$ can be covered at least twice by the images of basic subintervals.
Under some conditions, we will show that the set of double covering points $\mathcal{G}(I,J)$ is the whole interval $g(I\times J)$ except for endpoints.

\begin{lemma}\label{lem:G-1}
Let $I,J \in \mathcal{F}_n$ with $I=[a,a+\lambda^n]$, $J=[b, b+\lambda^n]$, and $a\le b$.
Suppose that
\begin{equation}\label{eq:desired-condition-2}
  \frac{1-2\lambda}{\lambda} (a+\lambda^n) \le  b < b+\lambda^n \le \frac{\lambda a}{1-2\lambda}.
\end{equation}
Then we have
\[ \big( \alpha_n(a,b), \beta_n(a,b) \big) \subset \mathcal{G}(I,J), \]
where $$\alpha_n(a,b) := a^2+b^2 + 2a(1-\lambda)\lambda^n + (1-\lambda)^2 \lambda^{2n} ,$$
and $$\beta_n(a,b) := a^2+b^2 + 2(a\lambda+b)\lambda^n + (1+\lambda^2) \lambda^{2n}.$$
\end{lemma}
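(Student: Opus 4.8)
The plan is to subdivide $I$ and $J$ one more time and read off double coverings already at level $n+1$. Write $I_1=[a,a+\lambda^{n+1}]$ and $I_2=[a+(1-\lambda)\lambda^n,a+\lambda^n]$ for the two children of $I$ in $\mathcal F_{n+1}$, and define $J_1,J_2$ from $J$ in the same way, so that $\mathcal F_{n+1}^I=\set{I_1,I_2}$ and $\mathcal F_{n+1}^J=\set{J_1,J_2}$. The goal is: for every $z\in\big(\alpha_n(a,b),\beta_n(a,b)\big)$ produce two \emph{distinct} pairs among the four $(I_i,J_j)$ whose images under $g(x,y)=x^2+y^2$ both contain $z$ in their interior. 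This exhibits $z$ as a double covering point with $k=n+1$, and hence $\big(\alpha_n(a,b),\beta_n(a,b)\big)\subset\mathcal G(I,J)$.

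First I would recall, from the computations displayed in the proof of Lemma~\ref{lem:check-condition}, the four explicit intervals $g(I_i\times J_j)$. One sees that $\alpha_n(a,b)$ is exactly the left endpoint of $g(I_2\times J_1)$ and $\beta_n(a,b)$ is exactly the right endpoint of $g(I_1\times J_2)$. Introduce shorthand $\gamma_n$ for the right endpoint of $g(I_2\times J_1)$, $\delta_n$ for the left endpoint of $g(I_1\times J_2)$, $R_1$ for the right endpoint of $g(I_1\times J_1)$, and $L_2,R_2$ for the endpoints of $g(I_2\times J_2)$. From $a\le b$ and $0<\lambda<1/2$ one reads off at once the easy orderings $a^2+b^2<\alpha_n(a,b)\le\delta_n\le L_2$ and $\gamma_n\le\beta_n(a,b)\le R_2$. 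The substantive input is the chain of \emph{strict} inequalities $\delta_n<R_1<\gamma_n$ and $L_2<\gamma_n$. Each of these, after subtracting $a^2+b^2$ and dividing by $\lambda^n$, is linear in $a$ and $b$; feeding in the two-sided bound $\tfrac{1-2\lambda}{\lambda}(a+\lambda^n)\le b\le\tfrac{\lambda a}{1-2\lambda}-\lambda^n$ of hypothesis~(\ref{eq:desired-condition-2}) reduces it to a polynomial inequality in $\lambda$ that is manifestly true on $(0,1/2)$ — in each case something of the shape $\lambda^n(1-\lambda)^2>0$ or $\lambda^n(1-\lambda^2)>0$. The key point to notice is that it is precisely the extra factor $\lambda$ in the upper bound for $b$ in~(\ref{eq:desired-condition-2}), compared with the weaker hypothesis of Lemma~\ref{lem:check-condition}, that supplies the slack forcing $g(I_2\times J_1)$ and $g(I_1\times J_2)$ to overlap and placing $R_1$ and $L_2$ strictly inside $(\alpha_n,\gamma_n)$.

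With these inequalities established, the three open intervals $(\alpha_n,R_1)$, $(\delta_n,\gamma_n)$ and $(L_2,\beta_n)$ are consecutively overlapping, and their union is exactly $\big(\alpha_n(a,b),\beta_n(a,b)\big)$. On $(\alpha_n,R_1)$ the point $z$ lies in the interior of both $g(I_1\times J_1)$ and $g(I_2\times J_1)$; on $(\delta_n,\gamma_n)$ it lies in the interior of both $g(I_2\times J_1)$ and $g(I_1\times J_2)$; and on $(L_2,\beta_n)$ it lies in the interior of both $g(I_2\times J_2)$ and $g(I_1\times J_2)$. In each case the two pairs are distinct and lie in $\mathcal F_{n+1}^I\times\mathcal F_{n+1}^J$, so $z\in\mathcal G(I,J)$, and the lemma follows. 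I expect the main obstacle to be not any single estimate but the bookkeeping in the middle step: arranging the eight endpoints in the correct order and verifying, uniformly in $n$, that each required inequality collapses — under the hypothesis — to a trivially true polynomial inequality in $\lambda$, with the $\lambda$-strengthened upper bound on $b$ being exactly the right amount of room.
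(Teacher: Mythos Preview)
Your proposal is correct and follows essentially the same approach as the paper: subdivide $I,J$ once, compute the four images $g(I_i\times J_j)$, and verify enough overlap among them so that every interior point of $g(I_2\times J_1)\cup g(I_1\times J_2)=[\alpha_n,\beta_n]$ is covered by two distinct pairs at level $n+1$. The paper packages the endpoint bookkeeping into a picture (Figure~\ref{fig:2}) and isolates the two ``new'' overlaps $g(I_1\times J_1)\cap g(I_1\times J_2)\ne\emptyset$ and $g(I_2\times J_1)\cap g(I_2\times J_2)\ne\emptyset$ exactly as you do, reducing both to $a\lambda-(b+\lambda^n)(1-2\lambda)\ge 0$, which is precisely the strengthened upper bound in~(\ref{eq:desired-condition-2}); your three-interval decomposition $(\alpha_n,R_1)\cup(\delta_n,\gamma_n)\cup(L_2,\beta_n)$ just makes explicit what the figure conveys.
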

\begin{proof}
Let $I_1,I_2,J_1,J_2$ be defined as in Lemma \ref{lem:check-condition}.
By Lemma \ref{lem:check-condition} and (\ref{eq:desired-condition-2}), the four intervals $g(I_1\times J_1), g(I_2\times J_1),g(I_1\times J_2),g(I_2\times J_2)$ are overlapping.
Moreover, we want to show that
\[g(I_1\times J_1)\cap g(I_1\times J_2)\ne\emptyset\quad\textrm{and}\quad g(I_2\times J_1)\cap g(I_2\times J_2)\ne\emptyset,\] which is equivalent to
\begin{equation}\label{eq:equivalent-condition-2}
  \left\{
    \begin{array}{l}
      -(\lambda-1)^2\lambda^n \le 2a\lambda - 2(b+\lambda^n)(1-2\lambda),  \\
      (1-4\lambda+\lambda^2)\lambda^n \le 2a \lambda - 2b(1-2\lambda).
    \end{array}
  \right.
\end{equation}
Note that the left-hand sides of both two inequalities in (\ref{eq:equivalent-condition-2}) are negative for $2-\sqrt{3} < \lambda < 1/2$. Thus, we only need
$$a\lambda - (b+\lambda^n)(1-2\lambda)\ge 0,$$
which follows directly from (\ref{eq:desired-condition-2}).

\begin{figure}[h!]
\begin{center}
\begin{tikzpicture}[
    scale=10,
    axis/.style={very thick, ->},
    important line/.style={very thick},
    every node/.style={color=black}
    ]

    \draw[important line] (0, 0)--(0.3, 0);
    \node[above,scale=1pt]at(0.1,-0.06){$g(I_1\times J_1)$};

    \draw[important line] (0.15, 0.01)--(0.5,0.01);
    \node[above,scale=1pt]at(0.33,0.02){$g(I_2\times J_1)$};

    \draw[important line] (0.25, -0.01)--(0.6,-0.01);
    \node[above,scale=1pt]at(0.41,-0.07){$g(I_1\times J_2)$};

    \draw[important line] (0.45, 0)--(0.85,0);
    \node[above,scale=1pt]at(0.65,0.01){$g(I_2\times J_2)$};
\end{tikzpicture}
\end{center}
\caption{The relative position of intervals $g(I_1\times J_1)$, $g(I_2\times J_1)$, $g(I_1\times J_2)$ and $g(I_2\times J_2)$}\label{fig:2}
\end{figure}
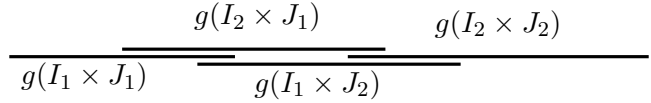
Now, the relative position of intervals $g(I_1\times J_1), g(I_2\times J_1),g(I_1\times J_2)$ and $g(I_2\times J_2)$ is illustrated as in Figure \ref{fig:2}.
By the definition of the set of double covering points, we conclude that
\[
(\alpha_n(a,b), \beta_n(a,b))=\mathrm{int}\big(g(I_2\times J_1)\cup g(I_1\times J_2)\big)\subset \mathcal G(I, J),
\]
completing the proof.
\end{proof}

\begin{lemma}\label{lem:G-2}
Let $I,J \in \mathcal{F}_n$ with $I=[a,a+\lambda^n]$, $J=[b, b+\lambda^n]$, and $a\le b$.
Suppose that
\begin{equation}\label{eq:desired-condition-3}
\frac{1-\lambda-\lambda^2}{\lambda} (a+\lambda^n) \le  b < b+\lambda^n \le \frac{\lambda a}{1-2\lambda}.
\end{equation}
Then we have
\[ \big( a^2+b^2, \beta_n(a,b) \big) \subset \mathcal{G}(I,J), \]
where $\beta_n(a,b)$ is defined as in Lemma \ref{lem:G-1}.
\end{lemma}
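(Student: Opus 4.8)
The plan is to deduce Lemma~\ref{lem:G-2} from Lemma~\ref{lem:G-1} by iterating the construction along the leftmost branches of $I$ and $J$. First, recall that throughout this section $2-\sqrt{3}<\lambda<1/2$, so $1-\lambda-\lambda^2>0$, $1-2\lambda>0$, and $1-2\lambda\le 1-\lambda-\lambda^2$ (equivalently $\lambda^2\le\lambda$). Hence the hypothesis (\ref{eq:desired-condition-3}) implies
\[
\frac{1-2\lambda}{\lambda}(a+\lambda^n)\le\frac{1-\lambda-\lambda^2}{\lambda}(a+\lambda^n)\le b<b+\lambda^n\le\frac{\lambda a}{1-2\lambda},
\]
which is precisely (\ref{eq:desired-condition-2}) for the pair $(I,J)$; so Lemma~\ref{lem:G-1} already gives $\big(\alpha_n(a,b),\beta_n(a,b)\big)\subset\mathcal{G}(I,J)$, and it remains to cover the interval $\big(a^2+b^2,\alpha_n(a,b)\big]$.

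Next, for each $j\ge 1$ I would consider the pair $I^{(j)}=[a,a+\lambda^{n+j}]$ and $J^{(j)}=[b,b+\lambda^{n+j}]$, obtained from $I$ and $J$ by applying $f_0$ exactly $j$ times; thus $I^{(j)}\in\mathcal{F}_{n+j}^I$ and $J^{(j)}\in\mathcal{F}_{n+j}^J$. Since $\lambda^{n+j}\le\lambda^n$ and $(1-\lambda-\lambda^2)/\lambda>0$, the pair $(I^{(j)},J^{(j)})$ again satisfies (\ref{eq:desired-condition-3}), hence also (\ref{eq:desired-condition-2}), so Lemma~\ref{lem:G-1} yields $\big(\alpha_{n+j}(a,b),\beta_{n+j}(a,b)\big)\subset\mathcal{G}(I^{(j)},J^{(j)})$. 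Moreover, since $I^{(j)}\subset I$ and $J^{(j)}\subset J$ one has $\mathcal{F}_k^{I^{(j)}}\subset\mathcal{F}_k^I$, $\mathcal{F}_k^{J^{(j)}}\subset\mathcal{F}_k^J$ for all $k$, and $g(I^{(j)}\times J^{(j)})\subset g(I\times J)$; reading off the definition of double covering points (and noting $k>n+j>n$) this gives $\mathcal{G}(I^{(j)},J^{(j)})\subset\mathcal{G}(I,J)$. Therefore $\bigcup_{j\ge 0}\big(\alpha_{n+j}(a,b),\beta_{n+j}(a,b)\big)\subset\mathcal{G}(I,J)$.

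The last step is to verify that this union equals exactly $\big(a^2+b^2,\beta_n(a,b)\big)$. Both $\alpha_{n+j}(a,b)$ and $\beta_{n+j}(a,b)$ are strictly decreasing in $j$ with limit $a^2+b^2$, and $\alpha_{n+j}(a,b)-(a^2+b^2)=(1-\lambda)\lambda^{n+j}\big(2a+(1-\lambda)\lambda^{n+j}\big)>0$, so each interval $\big(\alpha_{n+j}(a,b),\beta_{n+j}(a,b)\big)$ lies inside $\big(a^2+b^2,\beta_n(a,b)\big)$, and their union fills $\big(a^2+b^2,\beta_n(a,b)\big)$ precisely when consecutive intervals overlap, i.e.\ when $\alpha_{n+j}(a,b)<\beta_{n+j+1}(a,b)$ for all $j\ge 0$. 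Writing $t=\lambda^{n+j}$, a direct expansion gives
\[
\beta_{n+j+1}(a,b)-\alpha_{n+j}(a,b)=2\big(\lambda b-a(1-\lambda-\lambda^2)\big)t+(\lambda^4+2\lambda-1)t^2,
\]
and (\ref{eq:desired-condition-3}) yields $\lambda b\ge(1-\lambda-\lambda^2)(a+\lambda^n)\ge(1-\lambda-\lambda^2)(a+t)$, whence
\[
\beta_{n+j+1}(a,b)-\alpha_{n+j}(a,b)\ge\big(2(1-\lambda-\lambda^2)+\lambda^4+2\lambda-1\big)t^2=(1-\lambda^2)^2 t^2>0.
\]
This estimate is the crux of the argument, and the step I expect to require the most care: it is exactly here that the stronger coefficient $1-\lambda-\lambda^2$ in (\ref{eq:desired-condition-3}), rather than the $1-2\lambda$ of Lemma~\ref{lem:G-1}, is used, and it is what guarantees that the chain of intervals reaches all the way down to $a^2+b^2$. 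Combining the three steps yields $\big(a^2+b^2,\beta_n(a,b)\big)\subset\mathcal{G}(I,J)$, as claimed.
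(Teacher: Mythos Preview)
Your proof is correct and follows essentially the same route as the paper: iterate Lemma~\ref{lem:G-1} on the leftmost descendants $I^{(j)}=[a,a+\lambda^{n+j}]$, $J^{(j)}=[b,b+\lambda^{n+j}]$, use the monotonicity $\mathcal{G}(I^{(j)},J^{(j)})\subset\mathcal{G}(I,J)$, and verify the overlap $\alpha_{n+j}(a,b)<\beta_{n+j+1}(a,b)$ via the hypothesis (\ref{eq:desired-condition-3}). Your overlap estimate, yielding the clean lower bound $(1-\lambda^2)^2t^2$, is in fact a slight sharpening of the paper's (which simply drops the positive $(\lambda^2-1)^2\lambda^k$ term), but the underlying computation is identical.
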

\begin{proof}
Note that for any $I' \in \mathcal{F}_{k}^{I}, J' \in \mathcal{F}_{k}^{J}$ with $k \ge n$, we have $\mathcal{G}(I',J') \subset \mathcal{G}(I,J)$.
For $k \ge n$, let $I'_k =[a,a+\lambda^k]$ and $J'_k=[b,b+\lambda^k]$.
Then $I'_k \in \mathcal{F}_k^{I}$ and $J'_k \in \mathcal{F}_{k}^{J}$.
Thus we have $$\bigcup_{k=n}^\f \mathcal{G}(I'_k,J'_k) \subset \mathcal{G}(I,J).$$
Note that (\ref{eq:desired-condition-3}) implies (\ref{eq:desired-condition-2}). For each $k \ge n$, by Lemma \ref{lem:G-1} we obtain $\big( \alpha_{k}(a,b), \beta_k(a,b)  \big) \subset \mathcal{G}(I'_k,J'_k)$.
It follows that
\begin{equation}\label{eq:G-left}
  \bigcup_{k=n}^\f \big( \alpha_{k}(a,b), \beta_k(a,b)  \big) \subset \mathcal{G}(I,J).
\end{equation}
Note that $\alpha_k(a,b)\searrow a^2+b^2$ and $\beta_k(a,b)\searrow a^2+b^2$ as $k\to\f$. Moreover, $\alpha_k(a,b)<\beta_{k+1}(a,b)$ for all $k\ge n$, since
by calculation we have
\begin{align*}
  \lambda^{-k} \big( \beta_{k+1}(a,b) - \alpha_k(a,b) \big) &= 2b\lambda - 2(a+\lambda^k)(1-\lambda-\lambda^2) + (\lambda^2 -1)^2\lambda^k \\
& > 2b\lambda - 2(a+\lambda^k)(1-\lambda-\lambda^2) \\
& \ge 0,
\end{align*}
where the last inequality follows directly from (\ref{eq:desired-condition-3}). 
Therefore, by (\ref{eq:G-left}) we conclude that \[ \big( a^2+b^2, \beta_n(a,b) \big) \subset \mathcal{G}(I,J), \] as desired.
\end{proof}

\begin{lemma}\label{lem:G-3}
Let $I,J \in \mathcal{F}_n$ with $I=[a,a+\lambda^n]$, $J=[b, b+\lambda^n]$, and $a\le b$.
Suppose that (\ref{eq:desired-condition-3}) in Lemma \ref{lem:G-2} holds.
Then we have
\[ \mathcal{G}(I,J) = \big( a^2+b^2, a^2+b^2+2(a+b)\lambda^n+2\lambda^{2n} \big)={\rm int}\big( g(I\times J) \big). \]
\end{lemma}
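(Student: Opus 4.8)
The plan is to establish the two inclusions $\mathcal{G}(I,J) \subseteq {\rm int}\big(g(I\times J)\big)$ and $(a^2+b^2,\, a^2+b^2+2(a+b)\lambda^n+2\lambda^{2n}) \subseteq \mathcal{G}(I,J)$ separately, and then observe that the displayed open interval is precisely ${\rm int}\big(g(I\times J)\big)$. The middle equality is immediate from the formula $g(I\times J) = [a^2+b^2,\ a^2+b^2+2(a+b)\lambda^n+2\lambda^{2n}]$, which one reads off directly since $g$ is increasing in each coordinate on the positive quadrant and $I,J$ lie in $[0,1]$. The first inclusion is also essentially free: every point of $\mathcal{G}(I,J)$ is by definition an interior point of some $g(I_1\times J_1)$ with $I_1\times J_1 \subseteq I\times J$, hence an interior point of $g(I\times J)$; so $\mathcal{G}(I,J)\subseteq {\rm int}(g(I\times J))$. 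Thus the whole content of the lemma is the reverse inclusion.

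For the reverse inclusion, Lemma \ref{lem:G-2} already gives us the left portion $\big(a^2+b^2,\ \beta_n(a,b)\big) \subseteq \mathcal{G}(I,J)$ under hypothesis (\ref{eq:desired-condition-3}). What remains is to cover the right portion, namely the interval $\big[\beta_n(a,b),\ a^2+b^2+2(a+b)\lambda^n+2\lambda^{2n}\big)$, up to its right endpoint. The idea is to exploit self-similarity one level down and iterate, mirroring the proof of Lemma \ref{lem:G-2} but anchored at the right ends of $I$ and $J$ instead of the left ends. Concretely, set $I''_k = [a+\lambda^n-\lambda^k,\ a+\lambda^n]$ and $J''_k = [b+\lambda^n-\lambda^k,\ b+\lambda^n]$ for $k\ge n$; these are basic subintervals in $\mathcal{F}_k^I$ and $\mathcal{F}_k^J$ respectively, so $\mathcal{G}(I''_k, J''_k) \subseteq \mathcal{G}(I,J)$. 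Writing $a_k := a+\lambda^n-\lambda^k$ and $b_k := b+\lambda^n-\lambda^k$, I would check that the pair $(I''_k, J''_k)$ still satisfies the hypothesis (\ref{eq:desired-condition-3}) — here one uses that increasing both left endpoints by the same amount $\lambda^n-\lambda^k$ only helps the lower bound $b_k \ge \frac{1-\lambda-\lambda^2}{\lambda}(a_k+\lambda^k)$ and the monotonicity is in our favour for the upper bound as well, since (\ref{eq:desired-condition-3}) holds for $I,J$ with room to spare. Lemma \ref{lem:G-2} applied to $(I''_k,J''_k)$ then yields $\big(a_k^2+b_k^2,\ \beta_k(a_k,b_k)\big) \subseteq \mathcal{G}(I,J)$ for every $k\ge n$.

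The final step is to verify that the union $\bigcup_{k\ge n}\big(a_k^2+b_k^2,\ \beta_k(a_k,b_k)\big)$, together with the left piece from Lemma \ref{lem:G-2}, exhausts the whole open interval $\big(a^2+b^2,\ a^2+b^2+2(a+b)\lambda^n+2\lambda^{2n}\big)$. For this I would note that $a_k^2+b_k^2 \nearrow (a+\lambda^n)^2+(b+\lambda^n)^2 = a^2+b^2+2(a+b)\lambda^n+2\lambda^{2n}$ as $k\to\infty$, so the left endpoints of the right-anchored intervals sweep up to the right endpoint of $g(I\times J)$; and, symmetrically to the chain-overlap computation in Lemma \ref{lem:G-2}, consecutive intervals in this family overlap (one checks $a_{k+1}^2+b_{k+1}^2 < \beta_k(a_k,b_k)$ by a short computation, again reducing to the negativity of a $\lambda^k$-coefficient plus the inequality (\ref{eq:desired-condition-3})), and the first of them, at $k=n$, is $(a^2+b^2,\ \beta_n(a,b))$ — which overlaps the left piece trivially. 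Hence the union of all these overlapping intervals is a single open interval with left endpoint $a^2+b^2$ and right endpoint $a^2+b^2+2(a+b)\lambda^n+2\lambda^{2n}$, giving the desired inclusion. The main obstacle I anticipate is purely bookkeeping: making sure the hypothesis (\ref{eq:desired-condition-3}) genuinely propagates to every $(I''_k,J''_k)$ and that the overlap inequality $a_{k+1}^2+b_{k+1}^2 < \beta_k(a_k,b_k)$ is uniform in $k$; both should succumb to the same ``the bad term carries a positive power of $\lambda$, the good term is bounded below by the hypothesis'' mechanism used repeatedly above, but the algebra must be done carefully since the endpoints $a_k,b_k$ now vary with $k$.
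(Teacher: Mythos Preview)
Your proposal is correct and follows essentially the same route as the paper: define the right-anchored basic subintervals $I''_k=[a+\lambda^n-\lambda^k,\,a+\lambda^n]$, $J''_k=[b+\lambda^n-\lambda^k,\,b+\lambda^n]$ (the paper's $\widetilde{I}'_k,\widetilde{J}'_k$), verify that hypothesis (\ref{eq:desired-condition-3}) propagates to each pair, apply Lemma~\ref{lem:G-2}, and check the consecutive-overlap inequality $a_{k+1}^2+b_{k+1}^2<\beta_k(a_k,b_k)$ to conclude. The paper also notes that $\beta_k(a_k,b_k)\nearrow (a+\lambda^n)^2+(b+\lambda^n)^2$, but your observation that the \emph{left} endpoints $a_k^2+b_k^2$ already sweep up to this value (combined with the overlap) suffices equally well.
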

\begin{proof}
For $k \ge n$, let $\widetilde{I}'_k =[a_k,a_k+\lambda^k]$ and $\widetilde{J}'_k=[b_k,b_k+\lambda^k]$, where $a_k = a+\lambda^n -\lambda^k$ and $b_k = b+\lambda^n -\lambda^k$.
Note that $\widetilde{I}'_k  \in \mathcal{F}_k^{I}$ and $\widetilde{J}'_k \in \mathcal{F}_{k}^{J}$.
Thus we have $$\bigcup_{k=n}^\f \mathcal{G}(\widetilde{I}'_k,\widetilde{J}'_k) \subset \mathcal{G}(I,J).$$
By (\ref{eq:desired-condition-3}), we have
$$\frac{1-\lambda-\lambda^2}{\lambda} (a_k+\lambda^k) = \frac{1-\lambda-\lambda^2}{\lambda} (a+\lambda^n) \le  b \le b_k, $$
and $$b_k +\lambda^k = b+\lambda^n\le \frac{\lambda a}{1-2\lambda} \le \frac{\lambda a_k}{1-2\lambda}. $$
Applying Lemma \ref{lem:G-2} for $\widetilde{I}'_k ,\widetilde{J}'_k$, we obtain that $\big( a_k^2+b_k^2, \beta_k(a_k ,b_k)  \big) \subset \mathcal{G}(\widetilde{I}'_k,\widetilde{J}'_k)$.
It follows that
\begin{equation}\label{eq:G-right}
  \bigcup_{k=n}^\f \big( a_k^2+b_k^2, \beta_k(a_k ,b_k)  \big) \subset \mathcal{G}(I,J).
\end{equation}
Note that
\begin{align*}
  &\; \lambda^{-k}\big( \beta_k(a_k ,b_k) - a_{k+1}^2 - b_{k+1}^2 \big) \\
= &\; 2b\lambda - 2a(1-2\lambda) + 2(3\lambda-1)\lambda^n +(1-2\lambda-\lambda^2)\lambda^k \\
= &\; 2b\lambda - 2(a+\lambda^n)(1-2\lambda) + 2(\lambda^{n+1}-\lambda^{k+1}) +(1-\lambda^2)\lambda^k
\\
> & \; 2b\lambda - 2(a+\lambda^n)(1-\lambda-\lambda^2) \\
\ge & \; 0,
\end{align*}
where the last inequality follows from (\ref{eq:desired-condition-3}).
This implies that $\beta_k(a_k ,b_k) > a_{k+1}^2 + b_{k+1}^2$ for each $k \ge n$.
Note that $a_n = a$, $b_n= b$, and the sequence $\{\beta_k(a_k ,b_k)\}_{k=n}^\f$ is increasing to $(a+\lambda^n)^2+(b+\lambda^n)^2$ as $k \to \f$.
By (\ref{eq:G-right}), we conclude that
$$ \big( a^2+b^2, a^2+b^2+2(a+b)\lambda^n+2\lambda^{2n} \big) \subset \mathcal{G}(I,J). $$
By definition, the inverse inclusion is obvious. Thus we obtain the desired result.
\end{proof}

\begin{proposition}\label{prop:continuum}
Let $I,J \in \mathcal{F}_n$ with $I=[a,a+\lambda^n]$, $J=[b, b+\lambda^n]$, and $a<b$.
Suppose that (\ref{eq:desired-condition-3}) in Lemma \ref{lem:G-2} holds.
Then for any $a^2+b^2< r < a^2+b^2+2(a+b)\lambda^n+2\lambda^{2n}$, the intersection $\big\{(x,y):x^2+y^2= r \big\}\cap (K_\lambda\times K_\lambda)$ is of cardinality continuum.
\end{proposition}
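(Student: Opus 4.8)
Inside $I\times J$ I would build a full binary tree of sub-boxes whose branches encode a Cantor set of solutions of $x^2+y^2=r$. Call a box $P\times Q$ with $P,Q\in\mathcal F_m$ \emph{admissible} if the unordered pair $\{P,Q\}$, written as $([c,c+\lambda^m],[d,d+\lambda^m])$ with $c\le d$, satisfies hypothesis (\ref{eq:desired-condition-3}) of Lemma \ref{lem:G-2}; since $g(x,y)=x^2+y^2$ and $K_\lambda\times K_\lambda$ are symmetric under $(x,y)\mapsto(y,x)$ this is unambiguous, and by Lemma \ref{lem:G-3} an admissible box $B=P\times Q$ satisfies $\mathcal G(P,Q)=\mathrm{int}(g(B))$. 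I would first record two elementary facts, both consequences of (\ref{eq:desired-condition-3}) together with $2-\sqrt3<\lambda<1/2$: (A) admissibility is preserved when passing from $P\times Q$ to its all-$0$ extension $[c,c+\lambda^{m+1}]\times[d,d+\lambda^{m+1}]$, to its all-$1$ extension $[c+(1-\lambda)\lambda^m,c+\lambda^m]\times[d+(1-\lambda)\lambda^m,d+\lambda^m]$, or to any of its four level-$(m+1)$ children $P_i\times Q_j$ (the last possibly after interchanging which interval is read as "$c$" and which as "$d$", which is harmless by symmetry); and (B) for the reduced data $(a,b)$ of an admissible box at level $m$ one has $\alpha_m(a,b)<\beta_{m+1}(a,b)$, an inequality that collapses to $(1-\lambda^2)^2>0$ after using (\ref{eq:desired-condition-3}).

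\textbf{Branch-or-descend.} At an admissible box $B$ of level $m$ with reduced data $(a,b)$ and $r\in\mathrm{int}(g(B))$, I distinguish two cases. If $r\in(\alpha_m(a,b),\beta_m(a,b))$, then—using the overlap relations between $g(P_1\times Q_1),g(P_2\times Q_1),g(P_1\times Q_2),g(P_2\times Q_2)$ established in the proof of Lemma \ref{lem:G-1} (arranged as in Figure \ref{fig:2}) and a short analysis of the position of $r$ among their endpoints—one produces two of the four children that are \emph{disjoint} boxes and both contain $r$ in the interior of their $g$-image; by (A) both are admissible, and these become the two children of $B$ in the tree (the \emph{branch} step). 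If instead $r\le\alpha_m(a,b)$ (resp. $r\ge\beta_m(a,b)$), then $r$ lies in the interior of the $g$-image of the all-$0$ (resp. all-$1$) child and of no other child, and we pass to that child and repeat (the \emph{descend} step). By (B) the all-$0$ child of a descend-$0$ box is again a descend-$0$ box or a branch box but never a descend-$1$ box, while a run of descend-$0$ steps must terminate since $\alpha_m(a,b)\searrow a^2+b^2<r$; and one cannot descend along all-$1$ extensions forever, since their $g$-images share the fixed right endpoint $(a+\lambda^m)^2+(b+\lambda^m)^2>r$ and shrink to it. Hence from any admissible box with $r$ in its interior, finitely many descend steps reach a branch box. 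Iterating, one gets a full binary tree rooted at $I\times J$ (admissible by hypothesis, with $r\in\mathrm{int}(g(I\times J))$ exactly by the restriction imposed on $r$), every node an admissible box with $r$ in the interior of its $g$-image and with two children that are disjoint sub-boxes of strictly larger level.

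\textbf{The Cantor scheme.} For each $\omega\in\{0,1\}^{\N}$ the boxes along the corresponding branch are nested with levels tending to infinity, so they shrink to a single point $(x_\omega,y_\omega)\in K_\lambda\times K_\lambda$; since $r$ lies in $g$ of every box on the branch and those boxes decrease to $\{(x_\omega,y_\omega)\}$, Lemma \ref{lem:continuity} gives $x_\omega^2+y_\omega^2=r$. Two distinct sequences $\omega\ne\omega'$ first diverge at a node whose two children are disjoint boxes, so $(x_\omega,y_\omega)\ne(x_{\omega'},y_{\omega'})$. Thus $\{(x,y):x^2+y^2=r\}\cap(K_\lambda\times K_\lambda)$ contains a copy of $\{0,1\}^{\N}$, so it has cardinality at least $2^{\aleph_0}$; being a subset of $[0,1]^2$, it has cardinality exactly the continuum.

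\textbf{Main obstacle.} The step carrying the real weight is the position analysis in the branch case: from (\ref{eq:desired-condition-3}) one must pin down the exact ordering of the eight endpoints of $g(P_1\times Q_1),g(P_2\times Q_1),g(P_1\times Q_2),g(P_2\times Q_2)$ and then check, in each resulting subinterval for $r$, that two of the four children are disjoint, admissible, and both contain $r$ in their interior—including the degenerate case $a=b$, where $g(P_2\times Q_1)=g(P_1\times Q_2)$ but the two boxes $P_2\times Q_1$ and $P_1\times Q_2$ are still distinct and disjoint and supply the branching. This, together with the "interchange the coordinates'' subcase of (A) for the child $P_2\times Q_1$ and the inequality (B), reduces to a handful of polynomial inequalities in $\lambda$ that must be verified to hold on all of $(2-\sqrt3,1/2)$ using (\ref{eq:desired-condition-3}); everything else is soft.
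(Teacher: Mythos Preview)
Your proposal is correct and follows essentially the same binary-tree/Cantor-scheme strategy as the paper. The paper streamlines your branch-or-descend mechanism by first checking that (\ref{eq:desired-condition-3}) is inherited by every sub-box $I'\times J'$ with $I'\in\mathcal F_k^I$, $J'\in\mathcal F_k^J$ (this is exactly your fact~(A)), so that Lemma~\ref{lem:G-3} gives $\mathcal G(I',J')=\mathrm{int}\,g(I'\times J')$ throughout, and then reads off the two distinct sub-pairs at each node directly from the definition of $\mathcal G$ rather than locating them by hand among the level-$(m{+}1)$ children; note also that since $a<b$ the basic intervals $I,J$ are disjoint, so every descendant box automatically has its $I$-coordinate below its $J$-coordinate and the coordinate-swap and $a=b$ contingencies you flag never actually arise.
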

\begin{proof}
For $k \ge n$ and for $I'=[a',a'+\lambda^k] \in \mathcal{F}_k^{I}$ and $J'=[b',b'+\lambda^k] \in \mathcal{F}_k^{J}$, noting that $a\le a'\le a+\lambda^n -\lambda^k$ and $b\le b' \le b+\lambda^n -\lambda^k$, by (\ref{eq:desired-condition-3}) we have
$$\frac{1-\lambda-\lambda^2}{\lambda} (a'+\lambda^k) \le  b' < b'+\lambda^k \le \frac{\lambda a'}{1-2\lambda}. $$
By Lemma \ref{lem:G-3}, we conclude that
\begin{equation}\label{eq:induction-G}
\mathcal{G}(I',J')=\big( a'^2+b'^2, a'^2 + b'^2 + 2(a'+b')\lambda^k + 2 \lambda^{2k}  \big) = \mathrm{int}\big( g(I'{\times} J') \big).
\end{equation}

Fix $a^2+b^2< r < a^2+b^2+2(a+b)\lambda^n+2\lambda^{2n}$.
Next, we will inductively define two nested sequences of basic intervals $\set{I_{i_1\ldots i_n}}_n$ and $\set{J_{i_1\ldots i_n}}_n$ with each $i_1\ldots i_n \in \{1,2\}^n$ for $n \in \N$ such that
\begin{itemize}
  \item $I_{i_1\ldots i_n}, J_{i_1\ldots i_n}$ are basic intervals of $K_\lambda$ at the same level;
  \item $I_{i_1\ldots i_n 1}, I_{i_1\ldots i_n 2} \subset I_{i_1\ldots i_n}$;
  \item $J_{i_1\ldots i_n 1}, J_{i_1\ldots i_n 2} \subset J_{i_1\ldots i_n}$;
  \item $r \in \mathcal{G}(I_{i_1\ldots i_n} , J_{i_1\ldots i_n})$.
\end{itemize}
By Lemma \ref{lem:G-3}, we have $r \in \mathcal{G}(I,J)$. Then we can find two distinct pairs $(I_1,J_1),(I_2,J_2) \in \mathcal{F}_k^I \times \mathcal{F}_{k}^J$ for some $k \in \N$ such that $r$ is an interior point of both $g(I_1\times J_1)$ and $g(I_2 \times J_2)$.
By (\ref{eq:induction-G}), we have $r \in \mathcal{G}(I_1,J_1)$ and $r \in \mathcal{G}(I_2,J_2)$.
Assume that $I_{i_1\ldots i_n}, J_{i_1\ldots i_n}$ have been defined.
Since $r \in \mathcal{G}(I_{i_1\ldots i_n}, J_{i_1\ldots i_n})$, we can find two distinct pairs $$(I_{i_1\ldots i_n 1},J_{i_1\ldots i_n 1}),(I_{i_1\ldots i_n 2},J_{i_1\ldots i_n 2}) \in \mathcal{F}_{k'}^{I_{i_1\ldots i_n}} \times \mathcal{F}_{k'}^{J_{i_1\ldots i_n}}$$ for some $k' \in \N$ such that $r$ is an interior point of both $g(I_{i_1\ldots i_n 1} \times J_{i_1\ldots i_n 1})$ and $g(I_{i_1\ldots i_n 2} \times J_{i_1\ldots i_n 2})$.
By (\ref{eq:induction-G}), we have $r \in \mathcal{G}(I_{i_1\ldots i_n 1},J_{i_1\ldots i_n 1})$ and $r \in \mathcal{G}(I_{i_1\ldots i_n 2},J_{i_1\ldots i_n 2})$.

For any sequence $\mathbf{i} = i_1 i_2 \ldots \in \{1,2\}^\N$, we define $x_{\mathbf{i}}$ and $y_{\mathbf{i}}$ to be the unique point in $$\bigcap_{n=1}^\f I_{i_1 \ldots i_n} \quad \text{and} \quad \bigcap_{n=1}^\f J_{i_1 \ldots i_n},$$ respectively.
Then we have $x_{\mathbf{i}},y_{\mathbf{i}} \in K_\lambda$.
Note that $$r \in \mathcal{G}(I_{i_1\ldots i_n} , J_{i_1\ldots i_n}) \subset g(I_{i_1\ldots i_n} \times J_{i_1\ldots i_n}).$$
By Lemma \ref{lem:continuity}, we have
$$\{r\} \subset \bigcap_{n=1}^\f g(I_{i_1\ldots i_n} \times J_{i_1\ldots i_n}) = g\Big( \bigcap_{n=1}^\f I_{i_1 \ldots i_n} \times \bigcap_{n=1}^\f J_{i_1 \ldots i_n} \Big) =\{x_{\mathbf{i}}^2 + y_{\mathbf{i}}^2\}.$$
That is, $x_{\mathbf{i}}^2 + y_{\mathbf{i}}^2 = r$.
It follows that $$\big\{(x_{\mathbf{i}},y_{\mathbf{i}}): \mathbf{i} \in \{1,2\}^\N \big\} \subset \big\{(x,y):x^2+y^2= r \big\}\cap (K_\lambda\times K_\lambda).$$
Note that any two points $(x_{\mathbf{i}},y_{\mathbf{i}}), (x_{\mathbf{i}'},y_{\mathbf{i}'})$ for $\mathbf{i}\ne \mathbf{i}' \in \{1,2\}^\N$ are different.
Thus we conclude that the intersection $\big\{(x,y):x^2+y^2= r \big\}\cap (K_\lambda\times K_\lambda)$ is of cardinality continuum.
\end{proof}

\begin{proposition}\label{prop:circle-3}
For $0.407493 \le \lambda < 1/2$, the intersection $\mathbb{S}\cap (K_\lambda\times K_\lambda)$ is of cardinality continuum.
\end{proposition}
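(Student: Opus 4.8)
The plan is to invoke Proposition~\ref{prop:continuum} with $r=1$. Thus it suffices to produce, for every $\lambda\in[0.407493,1/2)$, an integer $n\ge1$ and basic intervals $I=[a,a+\lambda^n]$, $J=[b,b+\lambda^n]$ in $\mathcal{F}_n$ with $a<b$ that satisfy the separation condition (\ref{eq:desired-condition-3}) and, in addition,
\[
a^2+b^2<1<(a+\lambda^n)^2+(b+\lambda^n)^2 .
\]
Since $(a+\lambda^n)^2+(b+\lambda^n)^2=a^2+b^2+2(a+b)\lambda^n+2\lambda^{2n}$, the displayed inequalities say precisely that $1$ lies in the open interval to which Proposition~\ref{prop:continuum} applies; hence $\mathbb{S}\cap(K_\lambda\times K_\lambda)=\{(x,y):x^2+y^2=1\}\cap(K_\lambda\times K_\lambda)$ is of cardinality continuum. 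So everything reduces to exhibiting such $I,J$, which we do by splitting $[0.407493,1/2)$ at a point $\lambda_0$; one may take $\lambda_0=0.418$.

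For the lower range $0.407493\le\lambda\le\lambda_0$, take $n=3$ and the same pair as in the proof of Proposition~\ref{prop:circle-2}(ii), namely
\[
I=[\,1-\lambda,\ 1-\lambda+\lambda^3\,],\qquad J=[\,1-\lambda+\lambda^2-\lambda^3,\ 1-\lambda+\lambda^2\,],
\]
so $a=1-\lambda<b=1-\lambda+\lambda^2-\lambda^3$. The bound $a^2+b^2<1$ is exactly inequality (\ref{eq:desired-condition}), already verified for $\lambda\ge 0.330384$ (the left-hand side of (\ref{eq:desired-condition}) being decreasing). The inequality $(1-\lambda+\lambda^3)^2+(1-\lambda+\lambda^2)^2>1$ and the right-hand inequality of (\ref{eq:desired-condition-3}) become, after clearing denominators, polynomial inequalities in $\lambda$ which one checks on $[0.407493,\lambda_0]$ by monotonicity together with endpoint evaluation. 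The left-hand inequality of (\ref{eq:desired-condition-3}), after multiplying by $\lambda$ and simplifying, reads
\[
1-3\lambda+\lambda^2+\lambda^3-\lambda^5\le0 ;
\]
its left-hand side is strictly decreasing on $(0,1/2)$ and vanishes at the root of $\lambda^5-\lambda^3-\lambda^2+3\lambda-1$ lying just below $0.407493$, so the inequality holds throughout the lower range. This is exactly where the constant $0.407493$ enters.

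For the upper range $\lambda_0\le\lambda<1/2$, the basic intervals around $1-\lambda$ lie too far inside the unit disc, so instead take $n=4$ and let $I,J$ be the two level-$4$ basic subintervals occupying the left and right ends of the level-$3$ interval with code $101$:
\[
I=[\,1-\lambda+\lambda^2-\lambda^3,\ 1-\lambda+\lambda^2-\lambda^3+\lambda^4\,],\qquad J=[\,1-\lambda+\lambda^2-\lambda^4,\ 1-\lambda+\lambda^2\,].
\]
Here $a=1-\lambda+\lambda^2-\lambda^3<1-\lambda+\lambda^2-\lambda^4=b$ because $\lambda<1$, and the two intervals are disjoint since $\lambda<1/2$. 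As in the previous case, $a^2+b^2<1$, $(a+\lambda^4)^2+(b+\lambda^4)^2>1$, and both inequalities in (\ref{eq:desired-condition-3}) reduce to polynomial inequalities in $\lambda$ that are routine to verify for $\lambda\in[\lambda_0,1/2)$; note in particular that the right-hand inequality of (\ref{eq:desired-condition-3}) is comfortable here because its right-hand side $\lambda a/(1-2\lambda)$ tends to $+\infty$ as $\lambda\to1/2^-$. Combining the two ranges covers $[0.407493,1/2)$ and completes the proof.

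The only genuinely delicate point is the bookkeeping that guarantees, in each subrange of $\lambda$, the existence of \emph{some} admissible pair $I,J$ whose product straddles the unit circle: near $\lambda=1/2$ one is forced to move the basic intervals up toward the symmetric point $(1/\sqrt{2},1/\sqrt{2})$ of $\mathbb{S}$, while near the lower end the left-hand inequality of (\ref{eq:desired-condition-3}) becomes tight, which is precisely the obstruction to pushing the bound $0.407493$ down toward the conjectured $2-\sqrt{3}$. Once the pairs are chosen, the remaining verifications are elementary polynomial estimates.
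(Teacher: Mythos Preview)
Your proof is correct and essentially identical to the paper's: you invoke Proposition~\ref{prop:continuum} with the very same two pairs of basic intervals---$n=3$, $a=1-\lambda$, $b=1-\lambda+\lambda^2-\lambda^3$ on the lower subrange, and $n=4$, $a=1-\lambda+\lambda^2-\lambda^3$, $b=1-\lambda+\lambda^2-\lambda^4$ on the upper subrange---the only difference being your split point $\lambda_0=0.418$ in place of the paper's $0.415$, which is immaterial since both choices work. Your explicit identification of the polynomial $1-3\lambda+\lambda^2+\lambda^3-\lambda^5$ governing the left inequality of (\ref{eq:desired-condition-3}) in the $n=3$ case nicely pinpoints the origin of the constant $0.407493$, something the paper leaves to computer verification.
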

\begin{proof}
(i) For $0.415 \le \lambda < 1/2$, let $I=[1-\lambda+\lambda^2-\lambda^3, 1-\lambda+\lambda^2-\lambda^3+\lambda^4]$ and $J=[1-\lambda+\lambda^2-\lambda^4, 1-\lambda + \lambda^2]$. That is, $n=4$, $a = 1-\lambda+\lambda^2-\lambda^3$, and $b=1-\lambda+\lambda^2-\lambda^4$.
In order to apply Proposition \ref{prop:continuum}, we need to verify the following inequalities.
\begin{equation}\label{eq:computer-1}
\left\{
    \begin{array}{l}
      \frac{1-\lambda-\lambda^2}{\lambda}(1-\lambda+\lambda^2-\lambda^3+\lambda^4) \le 1-\lambda+\lambda^2-\lambda^4,\\
      1-\lambda + \lambda^2 \le \frac{\lambda}{1-2\lambda}(1-\lambda+\lambda^2-\lambda^3),\\
      (1-\lambda+\lambda^2-\lambda^3)^2 + (1-\lambda+\lambda^2-\lambda^4)^2< 1,  \\
      (1-\lambda+\lambda^2-\lambda^3+\lambda^4)^2 + (1-\lambda+\lambda^2)^2> 1.
    \end{array}
  \right.
\end{equation}

(ii) For $0.407494 \le \lambda < 0.415$, let $I=[1-\lambda, 1-\lambda+\lambda^3]$ and $J=[1-\lambda+\lambda^2-\lambda^3, 1-\lambda + \lambda^2]$. That is, $n=3$, $a = 1-\lambda $, and $b=1-\lambda+\lambda^2-\lambda^3$.
In order to apply Proposition \ref{prop:continuum}, we need to verify the following inequalities.
\begin{equation}\label{eq:computer-2}
\left\{
    \begin{array}{l}
      \frac{1-\lambda-\lambda^2}{\lambda}(1-\lambda+\lambda^3) \le 1-\lambda+\lambda^2-\lambda^3,\\
      1-\lambda + \lambda^2 \le \frac{\lambda}{1-2\lambda}(1-\lambda),\\
      (1-\lambda)^2 + (1-\lambda+\lambda^2-\lambda^3)^2< 1,  \\
      (1-\lambda+\lambda^3)^2 + (1-\lambda+\lambda^2)^2> 1.
    \end{array}
  \right.
\end{equation}

The inequalities in (\ref{eq:computer-1}) and (\ref{eq:computer-2}) can be easily checked with the assistance of computers.
The desired result follows from Proposition \ref{prop:continuum}.
\end{proof}


\begin{proof}[Proof of Theorem \ref{circle-lambda}]
It follows directly from Propositions \ref{prop:circle-1}, \ref{prop:circle-2}, and \ref{prop:circle-3}.
\end{proof}

\section{Intersection with a curve  or a surface}\label{sec:curve}

Recall that $K_\lambda$ is a self-similar set generated by the IFS
$\big\{ f_0(x) = \lambda x ,\; f_1(x) = \lambda x + 1- \lambda \big\}$ for $\lambda \in (0,1/2)$.
Each point $x \in K_\lambda$ corresponds to a unique sequence $(x_n) \in \{0,1\}^\N$, called the \emph{coding of $x$}, such that
$$x= (1-\lambda)\sum_{n=1}^{\f} x_n \lambda^{n-1}.$$
We will use the following Bernoulli inequality: for $n \in \N_{\ge 2}$,
\begin{equation}\label{Bernoulli-inequality}
  (1+x)^n > 1+nx, \quad \forall x > -1 \text{ with } x \ne 0.
\end{equation}
In this section, we always assume that $0< \lambda \le 1/3$, and $q \in \N$.

\begin{lemma}\label{key}
Let $x,y\in K_\lambda$, and let $(x_n),(y_n) \in \{0,1\}^{\N}$ be the codings of $x$ and $y$, respectively. Suppose $y=x^{q}$ for some $2 \le q \le 1/\lambda - 1$.
If $x_1x_2\ldots x_kx_{k+1}=0^k1$ for some $k \in \N$, then $y_1y_2\ldots y_{q k}y_{q k+1}=0^{q k}1$.
\end{lemma}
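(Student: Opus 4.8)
\textbf{Proof strategy for Lemma \ref{key}.} The plan is to translate the hypothesis $x_1\ldots x_kx_{k+1}=0^k1$ into an analytic sandwich for $x$, raise it to the $q$-th power, and read off the coding of $y=x^q$ from the resulting bounds. First I would record that the coding $0^k1\ldots$ means
\[
\lambda^k(1-\lambda)\le x\le \lambda^k,
\]
since the smallest point with this prefix is $f_{0^k1}(0)=\lambda^k(1-\lambda)$ and the largest is $f_{0^k}(1)=\lambda^k$. The goal is the analogous inequality one level finer in the $y$-coding, namely
\[
\lambda^{qk}(1-\lambda)\le y\le \lambda^{qk},
\]
because $y\in K_\lambda$ together with $\lambda^{qk}(1-\lambda)\le y\le\lambda^{qk}$ forces the coding of $y$ to begin with $0^{qk}1$ (the point $\lambda^{qk}$ is the right endpoint of $f_{0^{qk}}([0,1])$ and $\lambda^{qk}(1-\lambda)$ is the left endpoint of $f_{0^{qk}1}([0,1])$, and these are consecutive basic intervals, so any point of $K_\lambda$ in $[\lambda^{qk}(1-\lambda),\lambda^{qk}]$ lies in $f_{0^{qk}1}([0,1])$).

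Next I would establish the two needed estimates. The upper bound is immediate: $y=x^q\le(\lambda^k)^q=\lambda^{qk}$. For the lower bound I need $x^q\ge\lambda^{qk}(1-\lambda)$, and since $x\ge\lambda^k(1-\lambda)$ it suffices to show $(1-\lambda)^q\ge 1-\lambda$, i.e. $(1-\lambda)^{q-1}\ge 1$ — but that is false for $q\ge 2$, so a cruder bound is not enough and this is where the hypothesis $q\le 1/\lambda-1$ must enter. Instead I would use the better lower bound on $x$ coming from the full prefix $0^k1$: every point with this prefix satisfies $x\ge\lambda^k(1-\lambda)$, and I want to compare $x^q$ with $\lambda^{qk}(1-\lambda)$. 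Writing $x\ge\lambda^k(1-\lambda)$ gives $x^q\ge\lambda^{qk}(1-\lambda)^q$, so it remains to check $(1-\lambda)^q\ge(1-\lambda)$? No — the point is rather to use that $x$ could be as small as $\lambda^k(1-\lambda)$ only if its tail is all zeros, but then $x=\lambda^k(1-\lambda)$ exactly and $y=\lambda^{qk}(1-\lambda)^q$, whose coding I can compute directly; for the general case I should show the weaker $y\ge\lambda^{qk}(1-\lambda)$ holds. So the real inequality to verify is: for $\lambda^k(1-\lambda)\le x$, one has $x^q\ge\lambda^{qk}(1-\lambda)$. Equivalently $\big((1-\lambda)\big)^q\ge 1-\lambda$ composed with monotonicity fails, hence I must instead bound from below by using $x\ge\lambda^k$ is false; the correct route is to observe $1-\lambda\ge 1-\lambda$ trivially and that we actually need $x\ge \lambda^{k}(1-\lambda)^{1/q}$, which follows from $(1-\lambda)^{1/q}\le 1-\lambda+\text{something}$. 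The clean way: show $(1-\lambda)^{q}\ge 1-\lambda$ is wrong, so instead prove $\lambda^{qk}(1-\lambda)\le \lambda^{qk}(1-\lambda)^q + (\text{lower order})$ is automatically beaten because $x$ is genuinely at least $\lambda^k(1-\lambda)$ and $(1-\lambda)^{q-1}\ge 1-(q-1)\lambda\ge 1-(1/\lambda-2)\lambda=2\lambda-1+1>0$... The key computation is
\[
(1-\lambda)^{q-1}\ge 1-(q-1)\lambda\ge \frac{1-\lambda}{1-\lambda}\cdot(\text{?}),
\]
and here I would use the Bernoulli inequality (\ref{Bernoulli-inequality}) in the form $(1-\lambda)^{q-1}\ge 1-(q-1)\lambda$ together with $q-1\le 1/\lambda-2$, giving $(1-\lambda)^{q-1}\ge 1-(1/\lambda-2)\lambda = 2\lambda>0$ — still not obviously $\ge$ what is needed, so the honest statement is that I want $x^q\ge \lambda^{qk}(1-\lambda)$, and I will get it from $x\ge\lambda^k(1-\lambda)$ by checking $(1-\lambda)^q\ge(1-\lambda)^q$ and then arguing the coding directly on the extreme point while using an open-ness/monotonicity argument for interior points.

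\textbf{The main obstacle.} The genuine difficulty, and where the constraint $2\le q\le 1/\lambda-1$ is used, is precisely the lower estimate: one must show that raising to the $q$-th power does not push $y$ below the left endpoint $\lambda^{qk}(1-\lambda)$ of the basic interval $f_{0^{qk}1}([0,1])$, equivalently that $y$ does not fall into $f_{0^{qk+1}0}([0,1])$ or further left. I expect the cleanest argument compares $x$ with the right endpoint of the interval one step up: from $x_{k+1}=1$ we get not merely $x\ge\lambda^k(1-\lambda)$ but that $x$ lies in $f_{0^k1}([0,1])=[\lambda^k(1-\lambda),\lambda^k(1-\lambda)+\lambda^{k+1}]$, hence $x\le\lambda^k(1-\lambda)+\lambda^{k+1}=\lambda^k(1-\lambda^2)/(1)$... wait, $=\lambda^k(1-\lambda+\lambda)=\lambda^k$, recovering the upper bound. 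So the two facts are $\lambda^k(1-\lambda)\le x\le\lambda^k$, and the crux inequality is
\[
\big(\lambda^k(1-\lambda)\big)^q\ \ge\ \lambda^{qk}(1-\lambda)
\quad\Longleftrightarrow\quad (1-\lambda)^{q-1}\ge 1,
\]
which is \emph{false}; therefore the lower bound $y\ge\lambda^{qk}(1-\lambda)$ cannot hold for all such $x$, and the correct target must be weaker — I suspect the intended claim uses that $x\ge\lambda^k$ is forced by additional structure, or that the conclusion $y_1\ldots y_{qk}y_{qk+1}=0^{qk}1$ only needs $y\in[\lambda^{qk}(1-\lambda),\lambda^{qk})$ to \emph{not} hold and instead the coding argument runs through $y\le\lambda^{qk}$ strictly less than $\lambda^{qk-1}(1-\lambda)$, i.e. $\lambda^{qk}<\lambda^{qk-1}(1-\lambda)$ iff $\lambda<1-\lambda$, true, pinning $y$ into $f_{0^{qk}}$; and separately $y>\lambda^{qk+1}$, i.e. $x^q>\lambda^{qk+1}$, i.e. $x>\lambda^k\lambda^{1/q}$, which holds since $x\ge\lambda^k(1-\lambda)$ provided $(1-\lambda)>\lambda^{1/q}$, equivalently $(1-\lambda)^q>\lambda$, and \emph{this} is exactly where $q\le 1/\lambda-1$ enters via Bernoulli: $(1-\lambda)^q\ge 1-q\lambda\ge 1-(1/\lambda-1)\lambda=\lambda>\lambda$ is not strict, so one needs $q\le 1/\lambda-1$ with the strictness coming from the strict Bernoulli inequality (\ref{Bernoulli-inequality}) since $\lambda\neq 0$. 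So the plan is: (1) from $x_1\ldots x_{k+1}=0^k1$ deduce $\lambda^k(1-\lambda)\le x\le\lambda^k$; (2) get $y=x^q\le\lambda^{qk}<\lambda^{qk-1}(1-\lambda)$, which places $y$ strictly inside $f_{0^{qk}}([0,1])$ away from its neighbour on the left, so $y_1\ldots y_{qk}=0^{qk}$; (3) get $y=x^q\ge\lambda^{qk}(1-\lambda)^q>\lambda^{qk}\cdot\lambda=\lambda^{qk+1}$ using the strict Bernoulli inequality and $q\le 1/\lambda-1$, which forces $y_{qk+1}=1$; (4) conclude $y_1\ldots y_{qk}y_{qk+1}=0^{qk}1$. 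The delicate point is step (3), making the Bernoulli bound strict and matching the threshold $q\le 1/\lambda-1$ exactly.
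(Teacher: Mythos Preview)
Your final four-step plan is correct and is exactly the paper's argument: from $x_1\ldots x_{k+1}=0^k1$ deduce $\lambda^k(1-\lambda)\le x\le\lambda^k$; the upper bound gives $y\le\lambda^{qk}$ hence $y_1\ldots y_{qk}=0^{qk}$; the lower bound combined with the strict Bernoulli inequality $(1-\lambda)^q>1-q\lambda\ge\lambda$ (the last step using $q\le 1/\lambda-1$) gives $y>\lambda^{qk+1}$, forcing $y_{qk+1}=1$. The long detour through the false target $(1-\lambda)^{q-1}\ge 1$ is unnecessary---the correct threshold to compare against is $\lambda^{qk+1}$, not $\lambda^{qk}(1-\lambda)$---but once you identified that, your argument coincides with the paper's.
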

\begin{proof}
Since $x_1x_2\ldots x_k=0^k$, we have $$x= (1-\lambda)\sum_{n=k+1}^{\f}x_n\lambda^{n-1} \le \lambda^k.$$
Then $y = x^q \le \lambda^{q k}$.
This implies that $y_1y_2\ldots y_{q k}=0^{q k}$.

Note that  $x_{k+1}=1$. Then $$x= (1-\lambda)\sum_{n=k+1}^{\f}x_n\lambda^{n-1} \ge (1-\lambda)\lambda^k.$$
By Bernoulli inequality (\ref{Bernoulli-inequality}) and using $2\le q\le 1/\lambda -1$, we have $$y = x^q \ge (1-\lambda)^q \lambda^{qk} > (1- q\lambda)\lambda^{qk} \ge  \lambda^{qk+1} .$$
This together with $y_1y_2\ldots y_{q k}=0^{q k}$ implies $y_{qk +1} = 1$.
\end{proof}

\begin{lemma}\label{key1}
Let $x,y\in K_\lambda$, and suppose that $y=x^{q}$ for some $2 \le q \le 1/\lambda - 1$.
If $x,y\in [1-\lambda,1]\cap K_\lambda$, then $x=y=1$.
\end{lemma}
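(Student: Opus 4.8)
The plan is to exploit Lemma~\ref{key}, which transfers the initial coding pattern of $x$ to $y = x^q$, and then derive a contradiction unless $x = y = 1$. Suppose $x, y \in [1-\lambda, 1] \cap K_\lambda$ with $y = x^q$ and $2 \le q \le 1/\lambda - 1$. Write the coding $(x_n) \in \{0,1\}^\N$ of $x$. Since $x \ge 1-\lambda$, the point $x$ lies in the right branch $f_1([0,1])$, so $x_1 = 1$. I want to show $x_n = 1$ for all $n$, which forces $x = 1$ and hence $y = 1$.

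First I would handle the case where $x \ne 1$, seeking a contradiction. If $x \ne 1$ then its coding is not $1^\infty$, so there is a first index where a $0$ appears after the initial block of $1$'s; more useful is to locate a pattern of the form $1^j 0^k 1$ inside $(x_n)$. Applying the IFS map $f_{1^j}$ (which is a similarity of ratio $\lambda^j$) reduces to understanding when $x$, restricted to a basic interval, can have coding starting $0^k 1$. Here I would apply Lemma~\ref{key} in the form that a block $0^k 1$ starting at some position in the coding of $x$ — after peeling off an initial run of $1$'s via the self-similar structure — propagates to a correspondingly longer block $0^{qk}1$ in the coding of $y$. The key quantitative point is that powering by $q$ with $q \le 1/\lambda - 1$ \emph{stretches} gaps of $0$'s by a factor of $q$ while the interval lengths shrink only by $\lambda$ per level; since $q\lambda \le 1 - \lambda < 1$... wait, one must instead compare $q$ against $1$: a gap of length $k$ in $x$ becomes a gap of length $\ge qk > k$ in $y$, so $y$ is "more to the left" within its basic interval than $x$ is, contradicting $y = x^q \ge x$ when both are near $1$.

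More precisely, the contradiction I would extract is this: if $x < 1$, then the first $0$ in the coding of $x$ occurs at some position, say $x_1 = \cdots = x_{j} = 1$, $x_{j+1} = 0$. Then $x \le 1 - \lambda^j(1-\lambda) + \text{(tail)}$, and crucially $x < 1 - \lambda^{j}(1-\lambda) \cdot \text{something}$; meanwhile $y = x^q$, being still $\ge 1-\lambda$, has $y_1 = 1$, and one tracks where $y$ first drops below the relevant threshold. Using Lemma~\ref{key} applied to the conjugated system — i.e. look at $f_{1}^{-1}$ applied appropriately, or directly compare the codings — the block of $0$'s in $y$ starting after its initial $1$'s is strictly longer than predicted by mere monotonicity, because the Bernoulli-type estimate $(1-t)^q < 1 - t$ for the relevant $t$ pushes $y$ down by a definite amount. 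Iterating, one shows the coding of $x$ must be $1^\infty$, i.e. $x = 1$.

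The main obstacle I anticipate is bookkeeping the interplay between the initial run of $1$'s and the subsequent gap of $0$'s: Lemma~\ref{key} as stated begins with a pure $0^k1$ pattern, so one needs either to reduce to that case by applying $f_{1^j}$ and re-examining whether the hypotheses $2 \le q \le 1/\lambda - 1$ survive the conjugation (they should, since the ratio of the IFS is unchanged), or to prove a slightly more general version of Lemma~\ref{key} handling a leading $1^j$ block. I would also need to be careful that $y = x^q$ genuinely stays in $[1-\lambda, 1]$ throughout the iteration — but this is guaranteed by hypothesis, and it is exactly this constraint, combined with the gap-stretching, that produces the contradiction. The cleanest route is probably: assume $x \ne 1$, take $k$ minimal with $x_{k+1} = 0$ impossible directly, so instead find the first occurrence of $0$, conjugate away the leading $1$'s, apply Lemma~\ref{key} to get a long $0$-block in the (correspondingly conjugated) coding of $y$, and observe this violates $y \ge 1 - \lambda$ for $q$ large enough relative to the gap — forcing the gap to be empty, hence $x = 1$.
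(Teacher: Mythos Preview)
Your proposal has a genuine gap: the conjugation step does not work. When you ``peel off the leading $1$'s'' by applying $f_{1^j}^{-1}$ to $x$, you get a point $\tilde x = \lambda^{-j}(x - (1-\lambda^j))$ whose coding starts with $0^k 1$. But Lemma~\ref{key} requires the relation $\tilde y = \tilde x^{\,q}$, and this fails: the power map $t \mapsto t^q$ does not commute with the affine map $f_{1^j}$, so whatever point you obtain from $y$ by peeling off \emph{its} leading $1$'s is not $\tilde x^{\,q}$. Thus Lemma~\ref{key} gives no information about the coding of $y$ after its initial block of $1$'s. Your heuristic that ``a gap of length $k$ in $x$ becomes a gap of length $\ge qk$ in $y$'' is not established by Lemma~\ref{key} for interior blocks; that lemma only handles the \emph{leading} block of $0$'s in $x$'s coding. (Incidentally, you also write ``contradicting $y = x^q \ge x$''; since $x \le 1$ and $q \ge 2$, one has $y \le x$, not $y \ge x$.)

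The paper's proof bypasses codings and Lemma~\ref{key} entirely. It runs a direct induction on $k$, showing $x,y \in [1-\lambda^k,1]$ for all $k$. The inductive step uses the gap structure of $K_\lambda$: if $x \notin [1-\lambda^{k+1},1]$ then $x \le 1-(1-\lambda)\lambda^k$, and the elementary estimate $(1-(1-\lambda)\lambda^k)^2 < 1-\lambda^k$ forces $y = x^q \le x^2 < 1-\lambda^k$, contradicting the inductive hypothesis on $y$. Then Bernoulli's inequality with $q \le 1/\lambda - 1$ shows $y \ge (1-\lambda^{k+1})^q > 1-(1-\lambda)\lambda^k$, so $y \in [1-\lambda^{k+1},1]$ as well. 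This is morally the ``near-$1$'' analogue of Lemma~\ref{key}, but it is proved independently and does not reduce to it.
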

\begin{proof}
It suffices to show that $x,y\in [1-\lambda^\ell,1]\cap K$ for any $\ell \in \N$.
Clearly, the conclusion holds for $\ell =1$.
Next, suppose that $x,y\in [1-\lambda^k,1] \cap K_\lambda$ for some $k \in \N$, we show that $x,y\in [1-\lambda^{k+1},1]\cap K_\lambda$.
If this were done, we would complete the proof by induction.

Observe that $$x,y \in \big[ 1-\lambda^k, 1 \big] \cap K_\lambda \subset \big[ 1-\lambda^k, 1-(1-\lambda)\lambda^k \big] \cup \big[ 1-\lambda^{k+1}, 1 \big]. $$
If $x \le 1-(1-\lambda)\lambda^k$, then
\begin{equation*}
\begin{split}
  y & \le \Big( 1-(1-\lambda)\lambda^k \Big)^q \le \Big( 1-(1-\lambda)\lambda^k \Big)^2 \\
  & = 1- 2(1-\lambda) \lambda^k + (1-\lambda)^2\lambda^{2k} \\
  & \le 1 - 2(1-\lambda) \lambda^k + (1-\lambda)^2\lambda^{k+1} \\
  & \le 1- \lambda^k -(1-3\lambda + 2 \lambda^2 - \lambda^3)\lambda^k \\
  & < 1- \lambda^k.
  \end{split}
\end{equation*}
where the last inequality follows from $0<\lambda \le 1/3$.
This leads to a contradiction.
Thus, we obtain $x \in [1-\lambda^{k+1},1]\cap K_\lambda$.
Then, by Bernoulli inequality (\ref{Bernoulli-inequality}) and using $2\le q\le 1/\lambda -1$ we have
$$y = x^q\ge (1-\lambda^{k+1})^q > 1- q\lambda^{k+1} \ge 1- (1-\lambda)\lambda^k.$$
This implies that $y\in [1-\lambda^{k+1},1]\cap K_\lambda$.
Therefore, we conclude that $x,y\in [1-\lambda^{k+1},1]\cap K$.
\end{proof}

\begin{proposition}\label{prop:curve-1}
Suppose that $2 \le q \le 1/\lambda -1$. Then we have
$$\{(x,y):y=x^{q}\}\cap (K_\lambda \times K_\lambda) =\big\{ (\lambda^k,\lambda^{qk} ): k\in \mathbb{N}\big\}\cup\{(0,0), (1,1)\}. $$
\end{proposition}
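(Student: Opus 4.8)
The plan is to prove the two inclusions separately. For the easy direction, $\supseteq$, one checks that every point $(\lambda^k,\lambda^{qk})$ lies in $K_\lambda\times K_\lambda$: indeed $\lambda^k=f_{0^{k-1}}(1-\lambda)\cdot\tfrac{1}{1-\lambda}$... more cleanly, $\lambda^k$ has coding $0^{k-1}1000\ldots$ wait — one must be careful, since $\lambda^k = (1-\lambda)\sum_{n\ge k+1}\lambda^{n-1}\cdot$? Let me instead note $\lambda^k\in K_\lambda$ because $\lambda^k = f_{0^k}(1)$ and $1=\max K_\lambda\in K_\lambda$; similarly $\lambda^{qk}=f_{0^{qk}}(1)\in K_\lambda$, and clearly $(\lambda^{qk})=(\lambda^k)^q$. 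The points $(0,0)$ and $(1,1)$ are the images of the fixed points of $f_0$ and $f_1$. So $\supseteq$ holds.

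For the hard direction, $\subseteq$, take $(x,y)\in K_\lambda\times K_\lambda$ with $y=x^q$ and $2\le q\le 1/\lambda-1$. First dispose of the boundary cases: if $x=0$ then $y=0$; if $x=1$ then $y=1$. So assume $0<x<1$, and let $(x_n),(y_n)\in\{0,1\}^\N$ be the codings. The key is to combine Lemma \ref{key} and Lemma \ref{key1}. I would argue by cases on the coding $(x_n)$ of $x$. If $x_1=0$, then since $0<x<1$ the coding $(x_n)$ is not identically $0$, so there is a first index where a $1$ appears: $x_1\ldots x_kx_{k+1}=0^k1$ for some $k\in\N$. By Lemma \ref{key}, $y_1\ldots y_{qk}y_{qk+1}=0^{qk}1$, so $y\in f_{0^{qk}1}([0,1])$, i.e. $y\ge(1-\lambda)\lambda^{qk}$ and $y\le\lambda^{qk}$. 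The plan now is to iterate: I want to show that in fact $x=\lambda^k$ exactly, equivalently $x_n=0$ for all $n>k+1$. Suppose not; then applying the first-digit analysis to the tail $\sigma^{k+1}(x_n)$ (which starts with some $0$'s then a $1$), or more directly: if $x\in f_{0^k1}([0,1])$ but $x\ne\lambda^k$, push the argument one more level inside and track the forced prefix of $y$, deriving a contradiction with $y=x^q$ by the same Bernoulli-inequality squeeze used in Lemma \ref{key}. If $x_1=1$, then $x\ge 1-\lambda$, so $x\in[1-\lambda,1]\cap K_\lambda$; since $q\ge 2$ we still need $y=x^q\in K_\lambda$, and Lemma \ref{key1} forces $x=y=1$, contradicting $0<x<1$. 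This leaves only the case analyzed above, yielding $x=\lambda^k$ and hence $y=\lambda^{qk}$.

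I expect the main obstacle to be the iteration step in the $x_1=0$ case: showing that once the coding of $x$ begins $0^k1$, it must be exactly $0^k1^\infty$-equivalent to $\lambda^k$ — that is, no further $1$'s after position $k+1$. The cleanest route is probably a second induction: assume $x\in f_{0^k1^j}([0,1])\cap K_\lambda$ (so $x$ is close to $\lambda^k$ from below) and show $x\in f_{0^k1^{j+1}}([0,1])$, using that the corresponding lower and upper bounds on $x^q$, obtained via \eqref{Bernoulli-inequality} and $2\le q\le 1/\lambda-1$, trap $y$ strictly between the two candidate level-$(qk+\text{something})$ intervals unless the next digit of $x$ is $1$. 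This is exactly the mechanism of Lemmas \ref{key} and \ref{key1} combined, so the estimates are of the same flavor as those already carried out; assembling them into a clean induction is the only delicate bookkeeping. Once $x=\lambda^k$ is established the conclusion $y=\lambda^{qk}$ is immediate, completing the proof.
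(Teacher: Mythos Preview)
Your overall strategy is correct and matches the paper's, but you are making the $x_1=0$ case harder than necessary. Once Lemma~\ref{key} gives $x\in[(1-\lambda)\lambda^k,\lambda^k]$ and $y\in[(1-\lambda)\lambda^{qk},\lambda^{qk}]$, the paper simply \emph{rescales}: set $\tilde x=\lambda^{-k}x$ and $\tilde y=\lambda^{-qk}y$. By self-similarity of $K_\lambda$ one has $\tilde x,\tilde y\in[1-\lambda,1]\cap K_\lambda$, and $\tilde y=\lambda^{-qk}x^q=(\lambda^{-k}x)^q=\tilde x^{\,q}$, so Lemma~\ref{key1} applies \emph{directly} to give $\tilde x=\tilde y=1$, i.e.\ $x=\lambda^k$ and $y=\lambda^{qk}$. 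Your proposed second induction (showing $x\in f_{0^k1^j}([0,1])$ for all $j$) would work, but after dividing through by $\lambda^k$ it is literally the induction already carried out inside Lemma~\ref{key1}; there is no need to repeat that bookkeeping.

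There is also a small gap in your $x_1=1$ case: Lemma~\ref{key1} requires \emph{both} $x$ and $y$ to lie in $[1-\lambda,1]\cap K_\lambda$, and you have only checked this for $x$. The paper fills this with one line via \eqref{Bernoulli-inequality}: from $x\ge 1-\lambda$ and $q\le 1/\lambda-1$ one gets $y=x^q\ge(1-\lambda)^q>1-q\lambda\ge\lambda$, which forces $y_1=1$ since $K_\lambda\cap(\lambda,1-\lambda)=\emptyset$.
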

\begin{proof}
Take $x,y\in K_\lambda \setminus \{0\}$ with $y=x^{q}$.
Let $(x_n),(y_n) \in \{0,1\}^{\N}$ be the codings of $x$ and $y$, respectively.

If $x_1 = 1$, then $x \ge 1-\lambda$. By Bernoulli inequality (\ref{Bernoulli-inequality}) and using $2\le q\le 1/\lambda -1$, we have $$y = x^q \ge (1-\lambda)^q > 1- q \lambda \ge \lambda.$$
This implies that $y_1 = 1$.
Thus we have $x,y \in [1-\lambda,1]\cap K_\lambda$.
Hence, by Lemma \ref{key1} we conclude that $x=y=1$.

If $x_1 =0$, then since $x\ne 0$ there exists $k \in \N$ such that $$x_1 x_2 \cdots x_k = 0^k \text{ and } x_{k+1} = 1.$$
Then by Lemma \ref{key} we have
$$y_1 y_2 \cdots y_{q k} = 0^{q k} \text{ and } y_{q k+1} = 1.$$
Let $\tilde{x} = \lambda^{-k} x$ and $\tilde{y} = \lambda^{-q k} y$.
Then $\tilde{x},\tilde{y} \in [1-\lambda,1] \cap K_\lambda$, and $\tilde{y} = \tilde{x}^q$.
Hence, by Lemma \ref{key1} we conclude that $\tilde{x}=\tilde{y}=1$.
That is, $x= \lambda^k$ and $y = \lambda^{q k}$.
This completes the proof.
\end{proof}

\begin{proposition}\label{prop:curve-2}
For $0< \lambda \le 0.187915$, we have $$\{(x,y,z): x^2+y^2=z^2\}\cap (K_\lambda\times K_\lambda \times K_\lambda) =\{(x,0,x), (0,x,x): x \in K_\lambda\}. $$
\end{proposition}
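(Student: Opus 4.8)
The inclusion $\supseteq$ is immediate: since $0\in K_\lambda$, every triple $(x,0,x)$ and $(0,x,x)$ with $x\in K_\lambda$ satisfies $x^2+y^2=z^2$. For the reverse inclusion I would take $(x,y,z)\in K_\lambda^3$ with $x^2+y^2=z^2$ and prove $xy=0$. As the equation is symmetric in $x,y$, assume $x\le y$; then $z=\sqrt{x^2+y^2}$ is the largest coordinate, and it suffices to show $x=0$.

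The first step is a \emph{renormalisation}. If $z=0$ then $x=y=0$ and there is nothing to prove; otherwise let $k\ge0$ be the number of leading $0$'s in the (unique) coding of $z$. Then $x\le y\le z\le\lambda^k$, so the codings of $x$ and $y$ also start with $0^k$, and writing $(x,y,z)=\lambda^k(x',y',z')$ we get $x',y',z'\in K_\lambda$ with $x'^2+y'^2=z'^2$, $z'\in f_1([0,1])=[1-\lambda,1]$, and $x=0\iff x'=0$. Replacing $(x,y,z)$ by $(x',y',z')$ we may therefore assume $z\ge1-\lambda$. Now $2y^2\ge x^2+y^2=z^2\ge(1-\lambda)^2$ together with $(1-\lambda)/\sqrt{2}>\lambda$ (valid since $\lambda<\sqrt2-1$) forces $y>\lambda$, hence $y\in[1-\lambda,1]$; and from $x\le y$, $x^2+y^2\le1$ we obtain $x\le 1/\sqrt2$, so if $x>\lambda$ then $x\ge1-\lambda$ and $z^2=x^2+y^2\ge2(1-\lambda)^2>1$, which is impossible. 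Thus $x\in[0,\lambda]$. If $x=0$ we are done, so assume $x>0$; then $z^2>y^2$ gives $z>y$.

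The second step \emph{traps $x$ inside a gap of $K_\lambda$}. Write $x^2=z^2-y^2=(z-y)(z+y)$. Since $y,z\in[1-\lambda,1]$ with $y<z$ we have $2(1-\lambda)<z+y<2$. Because $y<z$ are distinct points of $K_\lambda\cap[1-\lambda,1]$, their codings agree on the first $n$ digits and differ at digit $n+1$ for some $n\ge1$, and an elementary computation with the IFS yields $\lambda^{n}(1-2\lambda)\le z-y\le\lambda^{n}$. Combining these estimates,
\[
2(1-\lambda)(1-2\lambda)\,\lambda^{n}\;<\;x^{2}\;<\;2\,\lambda^{n},
\]
so $x$ lies in the open interval $\bigl(\sqrt{2(1-\lambda)(1-2\lambda)}\,\lambda^{n/2},\ \sqrt{2}\,\lambda^{n/2}\bigr)$. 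The plan is then to check that, for $\lambda$ small, this interval is contained in one of the standard complementary gaps of $K_\lambda$: with $\ell=\lfloor n/2\rfloor$ it lies inside $(\lambda^{\ell},\lambda^{\ell-1}(1-\lambda))$ when $n$ is even and inside $(\lambda^{\ell+1},\lambda^{\ell}(1-\lambda))$ when $n$ is odd. These inclusions reduce to a handful of elementary polynomial inequalities in $\lambda$ — essentially $2(1-\lambda)(1-2\lambda)\ge1$ (to keep $x$ off the cylinder $[0,\lambda^{\ell}]$ or $[0,\lambda^{\ell+1}]$ on the left) together with $\sqrt2\,\lambda\le1-\lambda$ and $2\lambda\le(1-\lambda)^{2}$ (to keep $x$ off the adjacent cylinder on the right) — and all of them, along with the inequalities used in the first step, hold for $0<\lambda\le0.187915$. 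Since then $x\notin K_\lambda$, we reach a contradiction, and the proposition follows.

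The main obstacle, relative to the unit-circle case of Proposition \ref{prop:circle-1}, is the extra free coordinate $z$: a priori $z+y$ could be arbitrarily small, in which case $z-y=x^{2}/(z+y)$ is not controlled by $x$ and the Cantor-set geometry is useless. The renormalisation step is precisely what removes this: once $z$, and hence $y$, has been pushed into the top cylinder $[1-\lambda,1]$, one has the uniform lower bound $z+y>2(1-\lambda)$, and the argument then runs parallel to Proposition \ref{prop:circle-1} — the identity $x^{2}=(z-y)(z+y)$ pins the size of $x$ to the scale $\lambda^{n/2}$, which for small $\lambda$ lands $x$ strictly inside a gap of $K_\lambda$. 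The only slightly delicate bookkeeping is the parity split on $n$, caused by the half-integer power $\lambda^{n/2}$ for odd $n$; this is harmless because $\lambda^{1/2}\in(\lambda,1-\lambda)$ sits strictly inside the principal gap of $K_\lambda$.
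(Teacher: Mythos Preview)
Your proof is correct and takes a genuinely different route from the paper's. Both arguments begin with the same renormalisation to $z\in[1-\lambda,1]$. From there they diverge: the paper mirrors Proposition~\ref{prop:circle-1} and runs an induction on the scale of $x$, showing that $x\in[0,\lambda^k]$ forces $x\in[0,\lambda^{k+1}]$ by supposing $x\in[(1-\lambda)\lambda^k,\lambda^k]$ and splitting on whether $y\ge z-\lambda^{2k+1}$; in the second branch it uses the coding estimate $z-y\ge(1-2\lambda)\lambda^{2k}$ together with the inequality $1-6\lambda+3\lambda^2+4\lambda^3-4\lambda^4>0$, whose smallest positive root is the constant $0.187915$. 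You instead exploit the factorisation $x^2=(z-y)(z+y)$ once, read off the level $n$ at which the codings of $y$ and $z$ first disagree, and place $x$ directly inside a complementary gap of $K_\lambda$ via a parity split on $n$. Your argument is shorter, avoids the induction altogether, and its binding constraint $2(1-\lambda)(1-2\lambda)\ge1$ actually yields the slightly larger threshold $\lambda\le(3-\sqrt5)/4\approx0.19098$. The paper's version, on the other hand, has the virtue of reusing the template from the circle case essentially verbatim, so no genuinely new idea is needed once Proposition~\ref{prop:circle-1} is in hand.
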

\begin{proof}
The ideas are similar to the proof of Proposition \ref{prop:curve-1}.
Let $x,y,z \in K_\lambda$ with $x^2 + y^2  =  z^2$. By symmetry, we can assume $x\le y$.
We will show that $x=0$.
If $z=0$, then $x=y=z=0$. In the following, we assume that $z\ne 0$.

We first assume that $z \in [1-\lambda,1] \cap K_\lambda$.
If $x \ge 1-\lambda$, then using $0< \lambda < 1-\sqrt{2}/2$ we have $$ x^2 + y^2 \ge 2x^2 \ge 2(1- \lambda)^2 > 1 \ge z^2.$$
Thus, we conclude that $x \in [0,\lambda]$.
Next, suppose that $x\in [0,\lambda^k]$ for some $k\in \N$. We shall show $x\in [0,\lambda^{k+1}]$. If this were done, then we could conclude $x\in [0,\lambda^\ell]$ for any $\ell \in \N$, which implies that $x=0$.

Observe that
$$x \in [0,\lambda^k] \cap K_\lambda \subset [0,\lambda^{k+1}] \cup [(1-\lambda)\lambda^k,\lambda^k]. $$
Suppose on the contrary that $x \in [(1-\lambda)\lambda^k,\lambda^k]$.
If $y \ge z-\lambda^{2k+1}$, then we have
\begin{align*}
x^2+y^2 & \geq (1-\lambda)^2 \lambda^{2k} + (z-\lambda^{2k+1})^2 \\
& > z^2 + (\lambda^2-2\lambda+1 - 2z\lambda) \lambda^{2k} \\
& \ge z^2 + (\lambda^2-4\lambda+1) \lambda^{2k}\\
& > z^2,
\end{align*}
where the last inequality follows from $\lambda < 2 - \sqrt{3}$.
If $y < z-\lambda^{2k+1}$, then let $(y_n),(z_n) \in \{0,1\}^{\N}$ be the codings of $y$ and $z$, respectively. That is, $$y =(1-\lambda)\sum_{n=1}^{\f} x_n \lambda^{n-1} \quad \text{ and} \quad z = (1-\lambda)\sum_{n=1}^{\f} z_n \lambda^{n-1}.$$
Since $y< z$, there exists $\ell \in \N$ such that
$$y_n = z_n \text{ for } 1\le n \le \ell-1, \quad \text{and} \quad y_\ell < z_\ell.$$
Then we have $y_\ell =0$ and $z_\ell = 1$.
Observe that $$\lambda^{2k+1} < z - y \le (1-\lambda)\sum_{n=\ell}^{\f} z_n \lambda^{n-1} \le \lambda^{\ell-1}.$$
This implies that $\ell \le 2k+1$.
It follows that
\begin{align*}
z-y & \ge (1-\lambda)\lambda^{\ell-1}  - (1-\lambda)\sum_{n=\ell+1}^{\f} y_n \lambda^{n-1} \\
& \ge (1-\lambda)\lambda^{\ell-1} -\lambda^{\ell} \\
& \ge (1-2\lambda)\lambda^{2k},
\end{align*}
i.e., $y \le z - (1-2\lambda)\lambda^{2k}$.
Thus using $z \ge 1-\lambda$ we have
\begin{align*}
  x^2 + y^2 & \le \lambda^{2k} + \Big( z - (1-2\lambda)\lambda^{2k} \Big)^2 \\
  & = z^2 - \Big( 2(1-2\lambda)z-1 \Big) \lambda^{2k} + (1-2\lambda)^2 \lambda^{4k} \\
  & \le z^2 - \Big( 2(1-2\lambda)(1-\lambda) - 1 - (1-2\lambda)^2 \lambda^2 \Big) \lambda^{2k} \\
  & = z^2 - ( 1-6\lambda+3\lambda^2 + 4\lambda^3 - 4\lambda^4 ) \lambda^{2k} \\
  & < z^2,
\end{align*}
where the last inequality follows from $0< \lambda \le 0.187915$.
This leads to a contradiction with $x^2 + y^2 = r^2$.
Thus, we conclude that $x \in [0,\lambda^{k+1}]$.

Next, if $z \in [0,\lambda]\cap K_\lambda$, then since $z\ne 0$ there exists $k \in \N$ such that $z \in [(1-\lambda)\lambda^k,\lambda^k]$.
Let $\tilde{x} = \lambda^{-k} x$, $\tilde{y} = \lambda^{-k} y$, and $\tilde{z} = \lambda^{-k} z$.
Then we have $\tilde{x},\tilde{y},\tilde{z} \in K_\lambda$ and $\tilde{x}^2 + \tilde{y}^2 - \tilde{z}^2 =0$.
Note that $\tilde{z} \in [1-\lambda,1]\cap K_\lambda$.
By the above arguments, we have $\tilde{x} = 0$.
This implies that $x =0$.

Therefore, we conclude that either $x=0,y=z \in K_\lambda$ or $y=0, x= z \in K_\lambda$.
\end{proof}

\begin{proof}[Proof of Theorem \ref{curve}]
It follows directly from Propositions \ref{prop:curve-1} and \ref{prop:curve-2}.
\end{proof}

\section{Intersection with a sequence}\label{sec:sequence}

We first introduce the quadratic residues and the Legendre's symbol. Let $p\in \N$ be an odd prime and let $a \in \mathbb{Z}$ with $p \nmid a$.
We say that $a$ is a \emph{quadratic residue} of $p$ if the congruence $x^2 \equiv a \pmod{p}$ has a solution in $\{1,2,\ldots,p-1\}$; otherwise, $a$ is called a \emph{quadratic non-residue} of $p$ \cite{Hardy2008}.
The Legendre's symbol is defined by
\[\left(\dfrac{a}{p}\right)_L=
\begin{cases}
1 & \text{if}\; a \;\text{is a quadratic residue of}\; p
\\-1 & \text{if}\; a \;\text{is a quadratic non-residue of}\; p.
\end{cases}\]

\begin{proof}[Proof of Theorem \ref{sequence}]
Since $(m-1)^2 \equiv 1 \pmod{m}$, we have
$$\left(\dfrac{-(m-1)}{m}\right)_L=\left(\dfrac{1}{m}\right)_L =1.$$
This means that $m-1 \not\in D$.
It follows that $1 \not\in K_{m,D}$.

We first show that for $n \in \N_{\ge 2}$, if $m \nmid n$ then  $1/n^{2} \not\in K_{m,D}$.
Fix $n \in \N_{\ge 2}$ with $m \nmid n$.
Since $m$ is prime, we have $\gcd(n^{2},m)=1$.
By \cite[Proposition 2.1.2]{Karma2002}, the expansion of $1/n^{2}$ in base $m$ is purely periodic, denoted by $(x_1 x_2 \ldots x_q)^\f$, where $x_1, x_2, \ldots, x_q \in \{0,1,\ldots, m-1\}$.
Then we have
$$
\frac{1}{n^{2}} = \bigg( \sum_{i=1}^{q} \frac{x_i}{m^i} \bigg)\bigg/ \bigg( 1- \frac{1}{m^q} \bigg).
$$
That is,
$$
(x_q + x_{q-1} m + \cdots + x_1 m^{q-1})n^{2} = m^q-1.
$$
Taking modulo $m$ on the both sides, we obtain
\begin{equation}\label{congruence}
 x_q n^{2} \equiv -1 \pmod{m}.
\end{equation}
Thus, $x_q\neq 0$ and it follows that
$$(x_q n)^{2} \equiv -x_q \pmod{m},$$
which yields that $$\left(\dfrac{-x_q}{m}\right)_L =1.$$
Thus, we conclude that $x_q \not\in D$.
Observe that $(x_1 x_2 \cdots x_q)^\f$ is the unique expansion of $1/n^{2}$ in base $m$. Therefore, we obtain $1/n^{2} \not\in K_{m,D}$.

Next, we show that
\begin{equation}\label{eq:june-5}
\left\{\dfrac{1}{n^{2}}:n\in \mathbb{N}\right\}\cap K_{m,D} \subset \left\{\dfrac{1}{m^{2\ell}}: \ell \in \N\right\}.
\end{equation}
Take $n \in \N$ with $1/n^{2} \in K_{m,D}$.
By the above arguments, we have $n \ge 2$ and $m \mid n$.
Write $n= \widetilde{n} m^\ell$ with $\ell \in \N$ and $m \nmid \widetilde{n}$.
If $\widetilde{n} \ge 2$, then we have $1/(\widetilde{n})^{2} \not\in K_{m,D}$. However,
$$ \frac{1}{(\widetilde{n})^{2}} = m^{2\ell} \cdot \frac{1}{n^{2}}\in K_{m,D}, $$
leading to a contradiction.
Thus, we obtain $\widetilde{n}=1$, i.e., $n = m^\ell$ with $\ell \in \N$.

(1) If $1\not\in D$, then noting that $m-1 \not\in D$, we have $$\left\{\dfrac{1}{m^{2\ell}}: \ell \in \N\right\} \cap K_{m,D} = \emptyset.$$
Therefore,  we conclude that $$\left\{\dfrac{1}{n^{2}}:n\in \mathbb{N}\right\}\cap K_{m,D} =\emptyset.$$

(2) If $1\in D$, then we clearly have
$$\left\{\dfrac{1}{m^{2\ell}}: \ell \in \N\right\} \subset K_{m,D}.$$
Therefore, by (\ref{eq:june-5}) we conclude that $$\left\{\dfrac{1}{n^{2}}:n\in \mathbb{N}\right\}\cap K_{m,D} =\left\{\dfrac{1}{m^{2\ell}}: \ell \in \N\right\},$$
as desired.
\end{proof}

Next, we give some basic properties about quadratic residues, see \cite[Theorem 82, 83, 93]{Hardy2008}.

\begin{proposition}\label{prop:quadratic-residue}
Let $p$ be an odd prime. Then
\begin{enumerate}[{\rm(i)}]
\item $\left(\dfrac{-1}{p}\right)_L=(-1)^{\dfrac{p-1}{2}}$;
\item $\left(\dfrac{2}{p}\right)_L=(-1)^{\dfrac{p^2-1}{8}}$;
\item if $a, b\in\mathbb Z$ with $p\nmid a$ and $p\nmid b$, then  $\left(\dfrac{ab}{p}\right)_L = \left(\dfrac{a}{p}\right)_L \left(\dfrac{b}{p}\right)_L$.
\end{enumerate}
\end{proposition}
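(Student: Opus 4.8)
All three identities are classical, and the plan is to deduce them from a single tool, \emph{Euler's criterion}: for an odd prime $p$ and $a\in\mathbb Z$ with $p\nmid a$,
$$\left(\dfrac{a}{p}\right)_L\equiv a^{(p-1)/2}\pmod p.$$
To establish Euler's criterion I would use that $(\mathbb Z/p\mathbb Z)^\times$ is cyclic of order $p-1$: fixing a generator $g$ and writing $a\equiv g^{j}$, the element $a$ is a quadratic residue iff $j$ is even, while by Fermat's little theorem $a^{(p-1)/2}\equiv g^{j(p-1)/2}$ equals $1$ when $j$ is even and equals the unique order-two element $-1$ when $j$ is odd; comparing the two characterizations gives the congruence. (Equivalently: the $(p-1)/2$ quadratic residues are all roots of $x^{(p-1)/2}-1$ over the field $\mathbb Z/p\mathbb Z$, and this polynomial has at most $(p-1)/2$ roots, so its root set is exactly the set of quadratic residues.)

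Granting Euler's criterion, part (iii) is immediate, since $(ab)^{(p-1)/2}=a^{(p-1)/2}b^{(p-1)/2}$ forces $\left(\frac{ab}{p}\right)_L\equiv\left(\frac{a}{p}\right)_L\left(\frac{b}{p}\right)_L\pmod p$, and two elements of $\{-1,1\}$ congruent modulo an odd prime $p\ge 3$ must be equal. Part (i) is the instance $a=-1$: Euler's criterion gives $\left(\frac{-1}{p}\right)_L\equiv(-1)^{(p-1)/2}\pmod p$, and the same $\{-1,1\}$ argument upgrades the congruence to an equality.

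For part (ii) I would bring in \emph{Gauss's lemma}: reducing each of $a,2a,\ldots,\tfrac{p-1}{2}a$ to its representative in $\{-(p-1)/2,\ldots,-1,1,\ldots,(p-1)/2\}$ and letting $\mu$ count how many of these representatives are negative, one has $\left(\frac{a}{p}\right)_L=(-1)^{\mu}$; this follows by multiplying all the representatives together, noting their absolute values are a permutation of $1,\ldots,(p-1)/2$, and comparing with $a^{(p-1)/2}\cdot\tfrac{p-1}{2}!$ via Euler's criterion. Taking $a=2$, the integers $2,4,\ldots,p-1$ already lie in $\{1,\ldots,p-1\}$, and $2k$ has a negative representative exactly when $2k>p/2$, i.e.\ $k>p/4$, so $\mu=\tfrac{p-1}{2}-\lfloor p/4\rfloor$. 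A four-case check according to $p\bmod 8$ shows that the parity of $\mu$ coincides with the parity of $(p^{2}-1)/8$ in each case, yielding $\left(\frac{2}{p}\right)_L=(-1)^{(p^{2}-1)/8}$. The only real work is this bookkeeping in (ii) — the correct normalization in Gauss's lemma and the modular case analysis; everything else is a one-line consequence of Euler's criterion. Since these are textbook facts, in the paper itself I would simply cite \cite[Theorems 82, 83, 93]{Hardy2008} rather than reproduce the arguments.
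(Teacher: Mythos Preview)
Your proposal is correct and in fact more than the paper provides: the paper does not prove this proposition at all but simply states it with the citation ``see \cite[Theorem 82, 83, 93]{Hardy2008}'', exactly as you anticipate in your final sentence. Your sketched derivations via Euler's criterion and Gauss's lemma are the standard ones and are mathematically sound.
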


The most famous theorem in quadratic residues is Guass's law of reciprocity, see \cite[Theorem 98]{Hardy2008}.

\begin{theorem}[Quadratic Reciprocity Law]\label{qrl}
Let $p,q$ be two different odd primes. Then
$$\left(\dfrac{p}{q}\right)_L\left(\dfrac{q}{p}\right)_L=(-1)^{\dfrac{(p-1)(q-1)}{4}}.$$
\end{theorem}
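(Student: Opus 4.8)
The plan is to give the classical lattice-point counting proof, due to Eisenstein, which rests on Gauss's lemma (the same tool behind Proposition~\ref{prop:quadratic-residue}). First I would record Gauss's lemma: for an odd prime $p$ and $a\in\mathbb Z$ with $p\nmid a$, let $\mu$ be the number of $k\in\{1,2,\ldots,(p-1)/2\}$ for which the least positive residue of $ka$ modulo $p$ exceeds $p/2$; then $\left(\dfrac{a}{p}\right)_L=(-1)^{\mu}$. Its proof is short: choosing representatives of $ka$ in $(-p/2,p/2)$, say $\varepsilon_k r_k$ with $r_k\in\{1,\ldots,(p-1)/2\}$ and $\varepsilon_k\in\{\pm1\}$, one checks that $k\mapsto r_k$ is a permutation of $\{1,\ldots,(p-1)/2\}$, multiplies everything together, and compares with Euler's criterion $a^{(p-1)/2}\equiv\left(\dfrac{a}{p}\right)_L\pmod p$; the number of sign flips is exactly $\mu$.

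Next I would pass to Eisenstein's refinement, valid for \emph{odd} $a$: writing $ka=p\lfloor ka/p\rfloor+(ka\bmod p)$ and summing over $k=1,\ldots,(p-1)/2$, a parity computation (using that $a$ is odd) gives $\mu\equiv\sum_{k=1}^{(p-1)/2}\lfloor ka/p\rfloor\pmod 2$, hence
$$\left(\dfrac{a}{p}\right)_L=(-1)^{\sum_{k=1}^{(p-1)/2}\lfloor ka/p\rfloor}.$$
Applying this with $a=q$ (legitimate since $q$ is odd), and then with the roles of $p$ and $q$ interchanged (legitimate since $p$ is odd), I obtain
$$\left(\dfrac{p}{q}\right)_L\left(\dfrac{q}{p}\right)_L=(-1)^{S},\qquad S=\sum_{k=1}^{(p-1)/2}\left\lfloor\frac{kq}{p}\right\rfloor+\sum_{j=1}^{(q-1)/2}\left\lfloor\frac{jp}{q}\right\rfloor.$$

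The last step is to evaluate $S$ geometrically. In the $(x,y)$-plane, consider the lattice points of the rectangle $1\le x\le(p-1)/2$, $1\le y\le(q-1)/2$. For fixed $k$ the quantity $\lfloor kq/p\rfloor$ counts the lattice points $(k,j)$ in this rectangle with $pj<qk$, i.e.\ strictly below the diagonal line $py=qx$; symmetrically, for fixed $j$ the quantity $\lfloor jp/q\rfloor$ counts those strictly above it. Since $\gcd(p,q)=1$, the equation $py=qx$ has no solution with $1\le x\le(p-1)/2$, so no lattice point of the rectangle lies on the diagonal. Hence $S$ equals the total number of lattice points in the rectangle, namely $\frac{p-1}{2}\cdot\frac{q-1}{2}$, and the theorem follows. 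I expect the main obstacle to be the parity bookkeeping in Eisenstein's step — pinning down precisely why oddness of $a$ forces $\mu$ and $\sum\lfloor ka/p\rfloor$ to agree mod $2$ (which is also where the hypothesis that $q$ is odd, not merely prime, is used) — together with the routine but error-prone checks that each $\lfloor kq/p\rfloor$ does not overshoot $(q-1)/2$ and that the diagonal is genuinely free of relevant lattice points.
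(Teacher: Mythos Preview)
The paper does not actually prove this theorem: it is stated as a classical result with a citation to Hardy--Wright \cite[Theorem 98]{Hardy2008}, and no proof is given. Your proposal --- Gauss's lemma followed by Eisenstein's lattice-point count --- is correct and is in fact essentially the argument found in the cited reference, so there is nothing further to compare.
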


%
%

%

 We utilize Theorem \ref{sequence} and properties of quadratic residue to obtain the following corollary. 

\begin{corollary}
For any $k \in \N_{\ge 2}$, there exist infinite many primes $m >k$ such that the Cantor set $K_{m,D}$  with digit set $D=\{0,1,\ldots,k\}$ satisfying
$$\left\{\dfrac{1}{n^{2}}:n\in \mathbb{N}\right\}\cap K_{m,D} =\left\{\dfrac{1}{m^{2\ell}}:\ell\in \mathbb{N}\right\}.$$
\end{corollary}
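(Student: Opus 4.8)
By Theorem \ref{sequence}(2), since $0,1\in D=\{0,1,\ldots,k\}$, it suffices to exhibit infinitely many primes $m$ with $m>k+1$ (so that $1<\#D=k+1<m$) and
$$\left(\frac{-a}{m}\right)_L=-1\qquad\text{for every }a\in\{1,2,\ldots,k\}.$$
The plan is to reduce this whole family of conditions to a single congruence class for $m$ and then apply Dirichlet's theorem on primes in arithmetic progressions. First I would reduce to prime values of $a$: by the multiplicativity of the Legendre symbol (Proposition \ref{prop:quadratic-residue}(iii)), writing $a=\prod_p p^{e_p}$ gives $\left(\frac{-a}{m}\right)_L=\left(\frac{-1}{m}\right)_L\prod_p\left(\frac{p}{m}\right)_L^{e_p}$, so it is enough to arrange
$$\left(\frac{-1}{m}\right)_L=-1\qquad\text{and}\qquad\left(\frac{p}{m}\right)_L=1\ \text{ for every prime }p\le k.$$

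Next I would translate each requirement into a congruence on $m$. By Proposition \ref{prop:quadratic-residue}(i), $\left(\frac{-1}{m}\right)_L=-1$ is equivalent to $m\equiv3\pmod4$; by Proposition \ref{prop:quadratic-residue}(ii), $\left(\frac{2}{m}\right)_L=1$ is equivalent to $m\equiv\pm1\pmod8$; these two hold simultaneously exactly when $m\equiv7\pmod8$, which I impose as the first condition. For an odd prime $p$ with $3\le p\le k$, note that once $m\equiv3\pmod4$ the exponent factor in the Quadratic Reciprocity Law (Theorem \ref{qrl}) satisfies $(-1)^{(p-1)(m-1)/4}=(-1)^{(p-1)/2}=\left(\frac{-1}{p}\right)_L$, hence $\left(\frac{p}{m}\right)_L=\left(\frac{m}{p}\right)_L\left(\frac{-1}{p}\right)_L=\left(\frac{-m}{p}\right)_L$; therefore imposing $m\equiv-1\pmod p$ forces $\left(\frac{p}{m}\right)_L=\left(\frac{1}{p}\right)_L=1$. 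Let $M=8\prod_{p}p$, the product running over the odd primes $p\le k$. By the Chinese Remainder Theorem the conditions $m\equiv7\pmod8$ together with $m\equiv-1\pmod p$ for each such $p$ combine into a single class $m\equiv a_0\pmod M$, and since $\gcd(a_0,8)=\gcd(a_0,p)=1$ we have $\gcd(a_0,M)=1$. Dirichlet's theorem then produces infinitely many primes $m$ in this class, all but finitely many exceeding $k+1$; each such $m$ meets every hypothesis of Theorem \ref{sequence}(2), which gives the asserted equality.

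I do not anticipate a serious obstacle: the argument is essentially bookkeeping that assembles multiplicativity, the two supplementary laws, quadratic reciprocity, the Chinese Remainder Theorem, and Dirichlet's theorem. The only delicate point is the interaction at the prime $2$ — one must observe that $\left(\frac{-1}{m}\right)_L=-1$ and $\left(\frac{2}{m}\right)_L=1$ are jointly realizable and together amount precisely to $m\equiv7\pmod8$ — together with the degenerate case $k=2$, where there are no odd primes $\le k$ and the lone condition $m\equiv7\pmod8$ already does the job.
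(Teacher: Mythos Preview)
Your proposal is correct and follows essentially the same route as the paper. The paper writes down the progression $m=4np_1p_2\cdots p_t-1$ (with $p_1=2,p_2,\ldots,p_t$ the primes $\le k$) directly rather than invoking the Chinese Remainder Theorem, but this is precisely the class $m\equiv -1\pmod{8\prod_{3\le p\le k}p}$ that your CRT step produces, and the subsequent verification via Proposition~\ref{prop:quadratic-residue} and Theorem~\ref{qrl} is the same computation in both arguments.
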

\begin{proof}
Let $p_1=2,p_2,\ldots,p_t$ be all primes less than or equal to $k$.
By Dirichlet's theorem, there are infinite many primes $m > k$ of the form $4np_1 p_2 \ldots p_t -1$ with $n \in \N$.
Fix a such prime $m = 4np_1 p_2 \ldots p_t -1$.

By Proposition \ref{prop:quadratic-residue} (i), we have
\begin{equation}\label{eq:Legendre-symbol-1}
  \left(\dfrac{-1}{m}\right)_L=-1.
\end{equation}
For $2 \le j \le t$, by the quadratic reciprocity law, we have
\begin{align*}
\left(\frac{p_j}{m}\right)_L&=\left(\frac{m}{p_j}\right)_L\cdot (-1)^{\frac{(p_j-1)(m-1)}{4}}\\
&=\left(\frac{-1}{p_j}\right)_L\cdot (-1)^{\frac{(p_j-1)(m-1)}{4}}\\
&=(-1)^{\frac{p_j-1}{2}}\cdot (-1)^{\frac{(p_j-1)(m-1)}{4}}\\
&= (-1)^{\frac{(p_j-1)(m+1)}{4}}=1.
\end{align*}
Note by Proposition \ref{prop:quadratic-residue} (ii) that
$$ \left(\dfrac{2}{m}\right)_L=1.$$
So, we have showed that \begin{equation}\label{eq:june5-1}
 \left(\frac{p_j}{m}\right)_L = 1,\quad \forall 1\le j \le t.
 \end{equation}
Each $2\le i \le k$ can be factored as $p_1^{n_1} p_2^{n_2} \ldots p_t^{n_t}$, where $n_1,n_2,\ldots,n_t \ge 0$.
Thus, by (\ref{eq:june5-1}) and Proposition \ref{prop:quadratic-residue} (iii), we obtain that
$$ \left(\frac{i}{m}\right)_L = 1,\quad \forall 2\le i \le k.$$
Again by Proposition \ref{prop:quadratic-residue} (iii) and by (\ref{eq:Legendre-symbol-1}), we have
$$ \left(\frac{-i}{m}\right)_L = -1,\quad \forall 1\le i \le k.$$
By Theorem \ref{sequence}, we conclude the desired result.
\end{proof}

%

\section{Some questions}\label{sec:question}

At the end of this paper, we list some questions we are interested in.
Let $C$ be the middle-third Cantor set.

\begin{question}
Is the intersection
$$\{(x,y):x^2+y^2=1\}\cap (C\times C)$$
infinite? Moreover, to determine the Hausdorff dimension of the intersection.
We expect the value to be $2 \log 2 / \log 3 -1$.
\end{question}

\begin{question}
What is the intersection
$$
\left\{\dfrac{1}{n^2}:n\in \mathbb{N}\right\}\cap C ?
$$
It's easy to check that
$$\left\{\dfrac{1}{9^k},\dfrac{1}{4\times 9^k},\dfrac{1}{121\times 9^k}:k\in \mathbb{N}\cup \{0\} \right\} \subset \left\{\dfrac{1}{n^2}:n\in \mathbb{N}\right\}\cap C.$$
Is there any other elements in the intersection?
\end{question}

\begin{question}
Do we have
$$
\left\{\dfrac{1}{n!}:n\in \mathbb{N}\right\}\cap C=\{1\}?
$$
\end{question}

\section*{Acknowledgements}
K.~Jiang was supported by Zhejiang Provincial Natural Science Foundation of China
under Grant No. LMS25A010009.
D.~Kong was supported by Chongqing NSF No.~CQYC20220511052.
W.~Li was supported by NSFC No.~12471085 and Science and Technology Commission of Shanghai Municipality (STCSM) No. 22DZ2229014.  
Z.~Wang was supported by the China Postdoctoral Science Foundation No.~2024M763857.


\begin{thebibliography}{99}

\bibitem{Bloshchitsyn-2015}
V. Ya. Bloshchitsyn.
\newblock Rational points in m-adic Cantor sets.
\newblock {\em J. Math. Sci. (N.Y.)}, 211(6):747–751, 2015.



\bibitem{Karma2002}
Karma Dajani and Cor Kraaikamp.
\newblock Ergodic Theory of Numbers.
\newblock {Mathematical Association of
America, Washington, DC}, 2002.

\bibitem{DJY2024} Caimin Du, Kan Jiang and Yiqi Yao. \newblock
Rational and non-trivial solutions to  some algebraic  equations  in the Cantor sets. To appear in Monatsh. Math., 2024.



\bibitem{Falconer1990}
Kenneth Falconer.
\newblock Fractal Geometry. Mathematical Foundations and Applications.
\newblock {Wiley, Chichester}, 1990.


\bibitem{Hardy2008}
Godfrey Harold Hardy and Edward Maitland Wright.  \newblock   An introduction to the theory of numbers.  \newblock {Sixth edition. Revised by D. R. Heath-Brown and J. H. Silverman. With a foreword by Andrew Wiles. Oxford University Press, Oxford. xxii+621 pp. ISBN: 978-0-19-921986-5}, 2008.

\bibitem{Hutchinson1981}
John Hutchinson.
\newblock Fractals and self-similarity.
\newblock {\em Indiana Univ. Math. J.}, 30(5):713--747, 1981.


\bibitem{JKLW2024}  Kan Jiang, Derong Kong, Wenxia Li and Zhiqiang Wang. \newblock
Rational points in translations of the Cantor set. \newblock {\em Indag. Math. (N.S.)}, 35(3):516--522, 2024.


\bibitem{Lehmer1978}  Emma Lehmer. \newblock
Rational reciprocity laws. \newblock {\em Amer. Math. Monthly}, 85(6):467--472, 1978.

\bibitem{Li-Li-Wu-2022}
Bing Li, Ruofan Li and Yufeng Wu.
\newblock Rational numbers in $\times b$-invariant sets.
\newblock {\em Proc. Amer. Math. Soc.}, 151(5): 1877–1887, 2023.



\bibitem{Mattila1999}
Pertti Mattila.
\newblock Geometry of sets and measures in Euclidean spaces. Fractals and rectifiability, 1st paperback ed., Cambridge University Press, Cambridge, 1999.


\bibitem{Nagy-2001}
Judit Nagy.
\newblock Rational points in Cantor sets.
\newblock {\em Fibonacci Quart.}, 39(3):238–241, 2001.


\bibitem{Schleischitz-2021}
Johannes Schleischitz.
\newblock On intrinsic and extrinsic rational approximation to Cantor sets.
\newblock {\em Ergodic Theory Dynam. Systems}, 41(5):1560–1589, 2021.


\bibitem{Shmerkin-2019}
Pablo Shmerkin.
\newblock On Furstenberg's intersection conjecture, self-similar measures, and the $L^q$ norms of convolutions.
\newblock {\em Ann. of Math. (2)}, 189(2):319-391, 2019.



\bibitem{Shparlinski-2021}
Igor Shparlinski.
\newblock On the arithmetic structure of rational numbers in the Cantor set,
\newblock {\em Bull. Aust. Math. Soc.}, 103(1): 22–27, 2021.

\bibitem{Wall-1990}
Charles Wall.
\newblock Terminating decimals in the Cantor ternary set.
\newblock {\em Fibonacci Quart.}, 28(2):98–101, 1990.


\bibitem{Wu-2019}
Meng Wu.
\newblock A proof of Furstenberg's conjecture on the intersections of $\times p$- and $\times q$-invariant sets.
\newblock {\em Ann. of Math. (2)}, 189(3):707-751, 2019.

\bibitem{Yu2023}Han Yu. \newblock Missing digits points near manifolds. arXiv:2309.00130, 2023.


\end{thebibliography}
\end{document}